\documentclass[11pt]{article}
\usepackage{amsthm,amsmath,latexsym,amssymb}

\newcommand{\bP}{{\rm |\kern-.15em P}}
\newcommand{\Q}{\kern.3em\rule{.07em}{.65em}\kern-.3em{\rm Q}}
\newcommand{\R}{{\rm I\kern-.15em R}}
\newcommand{\D}{{\rm |\kern-.15em D}}
\newcommand{\h}{{\rm |\kern-.15em H}}
\newcommand{\C}{\kern.3em\rule{.07em}{.65em}\kern-.3em{\rm C}}
\newcommand{\T}{{\rm T\kern-.35em T}}

\theoremstyle{plain}
\newtheorem{theorem}{Theorem}[section]
\newtheorem{lemma}[theorem]{Lemma}

\newtheorem{proposition}[theorem]{Proposition}
\newtheorem{corollary}[theorem]{Corollary}

\theoremstyle{definition}
\newtheorem{definition}[theorem]{Definition}

\theoremstyle{remark}
\newtheorem{remark}[theorem]{Remark}

\begin{document}
\title{Some arithmetical aspects of the two dimensional Jacobian Conjecture}
\author{Ronen Peretz}
 
\maketitle

\begin{abstract}\noindent
We prove that any counter-example to the two dimensional Jacobian Conjecture with integral coefficients and determinant
of its Jacobian matrix equals one, has only paths of asymptotic values that can contain only finitely many points of
the planar integral lattice. The second part of our paper gives some nontrivial applications of the theory of certain
Theta functions to the two dimensional Jacobian Conjecture over the real field, and over the complex field.
\end{abstract}

\section{Introduction}\label{sec0}

The paper demonstrates connections between arithmetic and the two dimensional Jacobian Conjecture. There are two parts in our demonstration
(that are not related to one another). In the first part we show how the theory of diophantine equations have some deep connections to the
Jacobian Conjecture, mostly in two dimensions, but also in $n$ dimensions (for $n\ge 3$ the Jacobian Conjecture is false). Diophantine
equations are closely related to the problem of finding the lattice points of $\mathbb{Z}^2$ on certain planar affine algebraic curves
that are defined with coefficients that belong to finite algebraic extensions of $\mathbb{Z}$. In the second part of our paper we consider
dynamical systems of \'etale maps(in dimension two only) with coefficients which are integers. We are able to conclude some consequences
on the limiting maps of these dynamical systems. That is done using results from the theory of Theta functions. \\
Section \ref{sec1} is mostly a review of theorems of Nguyen Van Chau that cover some deep results, connected to the properties of the solution
sets of certain diophantine equations to properties of Keller maps. We also mention (after Van Chau) a surprising theorem of van de Dries
and Mckena regarding surjective Keller maps. \\
Section \ref{sec2} (still in the first part of our paper) contains a proof of the fact that if $F\in(\mathbb{Z}[X_1,\ldots,X_n])^n$ is \'etale
but not injective, then any path (to infinity) of asymptotic values of $F$ can contain only finitely many $\mathbb{Z}^n$ points. This is an
$n$-dimensional theorem, $N\ge 2$ of interest which is independent of whether the Jacobian Conjecture is valid in dimension $n$ (for $n\ge 3$
it is false). \\
Since we need to have accurate structural results on these asymptotic paths, we bring that structure theorem including its proof (known). \\
Another important tool we need is the Fiber Theorem for polynomial maps, and we recall the work of Kossivi Adjamagbo. \\
Section \ref{sec3} and section \ref{sec4} are dedicated to review some classical results in the theory of Theta functions. This is the
beginning of the second part of our paper. In section \ref{sec5} we use Theta functions in order to state a typical theorem (in fact it is
a theory) that we get using identities on Theta functions. We consider natural dynamical systems of \'etale maps in dimension two. We are
able to identify the natural boundary for the two complex dimensional holomorphic extension of the limiting map. Moreover, we get three
representations of that limiting functions and show the invariance of the \'etale property and of the injectivity property (if the starting
map is injective). We give in parallel the complex setting versus the real setting of these dynamical system.

\section{Some distinguished planar algebraic curves induced by two dimensional \'etale mappings, and the lattice $\mathbb{Z}^2$.}\label{sec1}

Nguyen Van Chau proved some deep theorems that point at relations between the Jacobian Conjecture in any dimension and the size of the
integral solutions of some diophantine systems which are naturally derived from the geometry of \'etale polynomial mapping under consideration. We now review these results of Nguyen Van Chau.

\begin{theorem}\label{A} {\rm \cite{c2022}} \\
Let $K$ be a number field and $\mathcal{O}_K$ the ring of integers in $K$. For every $n>1$ the conjecture {\rm JC}$(K,n)$: Polynomial
mappings $F=(F_1,F_2,\ldots,F_n)\in K[X_1,X_2,\ldots,X_n]^n$ with $JF:=\det\,DF\equiv 1$ {\rm (Keller mappings)} are invertible and hence
have an inverse in $K[X_1,X_2,\ldots,X_n]^n$, is equivalent to the following statement: \\
{\rm DJC}$(K,n)$: For every Keller mapping $F\in\mathcal{O}_K[X_1,X_2,\ldots,X_n]^n$ with \\ $F(0,0,\ldots,0)=(0,0,\dots,0)$ the affine
curve
$$
F_1(X_1,X_2,\ldots,X_n)=F_2(X_1,X_2,\ldots,X_n)=\ldots=F_n(X_1,X_2,\ldots,X_n)\,\,\,\,\,\,\,\,\,\,\,\,\,\,\,(\mathcal{C}_F)
$$
always has non-zero points in $\mathcal{O}_K^n$.
\end{theorem}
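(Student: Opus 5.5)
Both directions go through elementary reductions. The serious input is the classical fact that a polynomial map $F$ with $JF\equiv 1$ is injective if and only if it is invertible, with polynomial inverse over the ground field (Bia{\l}ynicki-Birula--Rosenlicht, together with the standard Galois and coefficient argument for descent from $\overline{K}$ to $K$). So to prove JC$(K,n)$ it suffices to show that every Keller map over $K$ is injective on $K^n$, or on a suitable extension.

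\textbf{JC$(K,n)\Rightarrow$ DJC$(K,n)$.} Let $F\in\mathcal{O}_K[X]^n$ be Keller with $F(0)=0$. By JC$(K,n)$, $F$ is an automorphism of $K^n$ with inverse $G\in K[X]^n$. The curve $\mathcal{C}_F$ is $F^{-1}$ of the diagonal $\{(t,\ldots,t)\}$, so it is parametrized by $t\mapsto G(t,\ldots,t)$. Write $G=G_0/d$ with $G_0\in\mathcal{O}_K[X]^n$ and $d\in\mathcal{O}_K\setminus\{0\}$. Since $G(0)=0$, $G$ has no constant term. Taking $t=d^m$ for $m$ at least $1$ plus the maximal number of primes dividing $d$, or more simply $t=d\cdot s$ with $s$ a suitable power of $d$, clears all denominators. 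This gives a point $G(t,\ldots,t)\in\mathcal{O}_K^n$, and it is nonzero because $G$ is injective and $t\neq 0$.

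\textbf{DJC$(K,n)\Rightarrow$ JC$(K,n)$.} Suppose JC$(K,n)$ fails, so some Keller map $F\in K[X]^n$ is not injective over $\overline{K}$. By the descent remark we may take the two points $a\ne b$ with $F(a)=F(b)$ in a finite extension. I would then want to replace $K$ by that extension. That is legitimate if one first shows the equivalence is uniform, or, more carefully, pushes the non-injectivity down to $K$ by a restriction-of-scalars trick. I expect this to be the main obstacle, and I would consult Van Chau's argument here.

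Assuming $a,b\in K^n$, I normalize in three steps.
\begin{enumerate}
\item Translate so that $a=0$ and $F(0)=0$.
\item Apply a linear change $L\in GL_n(K)$ of determinant $1$ in the source, sending $b$ to a chosen vector.
\item Apply a linear change $M$ in the target so that the common value and the diagonal line interact as needed.
\end{enumerate}
Then I apply a scaling $F\mapsto \lambda^{-1}F(\lambda X)$, which preserves $JF\equiv 1$ and $F(0)=0$. For a suitable $\lambda\in\mathcal{O}_K$ divisible by all denominators, this puts $F$ into $\mathcal{O}_K[X]^n$. The key design is to arrange, using $M$ and the freedom in $L$, that the curve $\mathcal{C}_F$ has no nonzero $\mathcal{O}_K$-points at all. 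This contradicts DJC$(K,n)$.

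The natural way to achieve this is to compose with a Keller "perturbation" that makes $F_1-F_j$ take values in a set avoiding $0$ on $\mathcal{O}_K^n\setminus\{0\}$. For instance, one can use the failure of injectivity to produce a Keller map $H=(H_1,\ldots,H_n)$ with $H(0)=0$ whose differences $H_i-H_1$ are units times a polynomial with no nontrivial integral zero. An example of such a polynomial is a norm form $N_{K'/K}$ evaluated after a congruence shift, modulo a prime $p$, to exclude solutions. This step, constructing such an $H$ and keeping integrality, is where I expect the real work to lie; I would try congruence conditions modulo a large prime combined with the scaling $\lambda=p^k$.
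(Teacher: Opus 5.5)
The paper gives no proof of this theorem. It is quoted from Van Chau, so your proposal has to stand on its own. Your direction JC$(K,n)\Rightarrow$ DJC$(K,n)$ is correct, and simpler than you make it. Since $G_0(t,\ldots,t)$ has no constant term, $t=d$ already gives $G(d,\ldots,d)=G_0(d,\ldots,d)/d\in\mathcal{O}_K^n$, which is nonzero because $G$ is injective.

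\textbf{The gap.} The converse direction has a genuine gap. You never produce, from a non-invertible Keller map, a Keller $H\in\mathcal{O}_K[X]^n$ with $H(0)=0$ and $\mathcal{C}_H\cap\mathcal{O}_K^n=\{0\}$. The mechanism you suggest cannot produce one. Because $H$ is \'etale, $\mathcal{C}_H=H^{-1}(\Delta)$ (with $\Delta$ the diagonal) is a smooth curve through $0$. By the $v$-adic implicit function theorem, every neighbourhood of $0$ in $\mathcal{C}_H(\mathcal{O}_{K_v})$ contains nonzero points, for every finite place $v$. So any congruence conditions satisfied by the origin are also satisfied by nonzero local points of $\mathcal{C}_H$, and no congruence argument can isolate $0$: the obstruction has to be global. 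Your norm-form idea also never uses non-invertibility, which any valid construction must do, since by the easy direction no such $H$ exists when JC holds. Two smaller points:
\begin{itemize}
\item $K$-rational collision points are not needed. Restriction of scalars could not supply them anyway, since it replaces $n$ by $n[K':K]$.
\item The scaling $\lambda^{-1}F(\lambda X)$ leaves the linear part unchanged. It makes $F$ integral only after you have normalized $DF(0)=I$, which is possible because $\det DF(0)=1$.
\end{itemize}

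\textbf{The missing ingredient.} What you need is a global finiteness theorem; after that, your scaling finishes the job. Let $F$ be a non-invertible Keller map over $K$, and let $l$ be a generic $K$-line through a point $F(p)$ with $p\in K^n$. Then $C=F^{-1}(l)$ has the following properties:
\begin{itemize}
\item It is irreducible, because the covering off the non-properness set stays connected over a generic line.
\item It is \'etale of degree $d\ge 2$ over $l$.
\item It maps onto $l$, because the complement of the image of a Keller map has codimension at least $2$.
\end{itemize}
Riemann--Hurwitz then rules out genus $0$ with at most two places at infinity. By Siegel's theorem, $C$ therefore has only finitely many points with coordinates in $D^{-1}\mathcal{O}_K$, for each fixed $D$. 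This is what stands behind the generic-line finiteness in Theorem~\ref{thm-c2023}(i).

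\textbf{Finishing the argument.} Put $\tilde F(X)=F(p+X)-F(p)$. Choose $A\in SL_n(K)$ sending $l-F(p)$ to $\Delta$, and set $B=(A\,DF(p))^{-1}$. Then $H=A\circ\tilde F\circ B$ is Keller, has identity linear part, and satisfies $\mathcal{C}_H=B^{-1}(F^{-1}(l)-p)$. Scale by some $\mu\in\mathcal{O}_K$ to make $H$ integral; the resulting $H_\mu$ has $\mathcal{C}_{H_\mu}\cap\mathcal{O}_K^n$ finite by the previous step. Then scale by a rational prime $\lambda$ such that $y\notin\lambda\mathcal{O}_K^n$ for each of the finitely many nonzero $y$ in $\mathcal{C}_{H_\mu}\cap\mathcal{O}_K^n$. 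Since $\mathcal{C}_{H_{\mu\lambda}}=\lambda^{-1}\mathcal{C}_{H_\mu}$, the resulting map has no nonzero $\mathcal{O}_K$-point on its curve, and so violates DJC$(K,n)$.
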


\begin{theorem}\label{B} {\rm \cite{c2022}} \\
If the Jacobian Conjecture over $\mathbb{C}$ is {\bf not} true, then for some $n\gg 1$ there exists a Keller map 
$F\in\mathbb{Z}[X_1,X_2,\ldots,X_n]^n$ of the so-called quadratic-cubic linear form over $\mathbb{Z}$,
$$
F_i(X_1,X_2,\ldots,X_n)=X_i+\langle a_i,X\rangle^{d_i},\,\,\,\,\,\,X=(X_1,X_2,\ldots,X_n),
$$
$$
a_i=(a_{i1},a_{i2},\ldots,a_{in})\in\mathbb{Z}^n,\,d_i\in\{2,3\}\,\,\,i=1,2,\ldots,n,
$$
such that the affine curve $(\mathcal{C}_F)$ has {\bf no} non-zero points in $\mathbb{Z}^n$.
\end{theorem}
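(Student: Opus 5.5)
The plan is to combine three ingredients: the classical reduction of the Jacobian Conjecture to cubic-linear (Dru\.zkowski) maps, a density/specialization argument that brings the coefficients down to $\mathbb{Z}$, and Theorem \ref{A} with $K=\mathbb{Q}$, $\mathcal{O}_K=\mathbb{Z}$. The idea is to start from a counterexample over $\mathbb{C}$ and deduce a counterexample over $\mathbb{Q}$ in some dimension. Theorem \ref{A} then says that DJC$(\mathbb{Q},n)$ fails. The remaining work is to make sure the witnessing map can be taken in quadratic-cubic linear form with integer vectors $a_i$.

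\textbf{Step 1 (descent to $\mathbb{Q}$).} Suppose JC over $\mathbb{C}$ fails in some dimension. By the Bass--Connell--Wright and Yagzhev reductions, together with Dru\.zkowski's theorem, there is a non-invertible Keller map of the form $X+(AX)^{*3}$ with $A$ a complex matrix and $(AX)^{*3}$ the coordinatewise cube. Its Keller property is equivalent to the nilpotency of $A\,\mathrm{diag}(AX)^2$, which is a system of polynomial equations over $\mathbb{Q}$ in the entries of $A$. Non-invertibility is the failure of a constructible condition; it amounts to the failure of an $\mathbb{Q}$-definable degree bound for the inverse, for instance $\deg F^{-1}\le (\deg F)^{n-1}$. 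So the bad locus is a nonempty constructible set defined over $\mathbb{Q}$, and it therefore has a $\overline{\mathbb{Q}}$-point. Restriction of scalars from a number field $L$ to $\mathbb{Q}$ turns this into a non-invertible Keller map over $\mathbb{Q}$ in dimension $n[L:\mathbb{Q}]$. Re-running the linearization reductions over $\mathbb{Q}$, which preserve the ground field, gives a quadratic-cubic linear Keller map with rational $a_i$. The degrees are $d_i\in\{2,3\}$, because the Gorni--Zampieri type ``pairing'' reductions over a non-closed field can produce squares.

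\textbf{Step 2 (clearing denominators and normalizing).} Pick an integer $N$ and conjugate by a scaling $X\mapsto NX$. Then $N^{-1}F(NX)=X+N^{d_i-1}\langle a_i,X\rangle^{d_i}$. For suitable $N$ each $N^{d_i-1}\langle a_i,X\rangle^{d_i}$ can be written as $\langle a_i',X\rangle^{d_i}$ with $a_i'\in\mathbb{Z}^n$; for $d_i=2$ this needs $N a_i^2$ to be a perfect square times an integer vector, which may require adjusting $N$ per coordinate via a diagonal scaling. Conjugation keeps the Keller property, non-invertibility, and $F(0)=0$, so we get a non-invertible Keller map $F\in\mathbb{Z}[X]^n$ of the required form.

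\textbf{Step 3 (no lattice points on $\mathcal{C}_F$).} This is the main obstacle. Theorem \ref{A} only says that \emph{some} Keller map over $\mathbb{Z}$ fixing $0$ has $\mathcal{C}_F\cap\mathbb{Z}^n=\{0\}$. We need that map itself to be of quadratic-cubic linear form. I would go into the proof of Theorem \ref{A}. There, from a non-invertible $F$, one builds the witness by composing with affine maps over $\mathcal{O}_K$ and large-integer scalings, chosen so that integer points of $\mathcal{C}_F$ would force the existence of an inverse. I would try to carry out that construction before the linearization of Step 1, and then check that the Bass--Connell--Wright stabilization and the Dru\.zkowski/Gorni--Zampieri steps transfer the property ``$\mathcal{C}_F$ has no nonzero integral points.'' Adding new variables $X_{n+1},\dots$ with the equations $F_j=X_j+\cdots$ should embed the new curve's integral points into the old one's. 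Verifying this transfer through each reduction step is where I expect the real difficulty to lie.
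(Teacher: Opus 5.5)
There is a genuine gap, and it is the step you flag yourself. The paper only quotes this theorem from Van Chau and gives no proof, so your argument has to stand on its own.

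\textbf{What Steps 1--2 give, and why Step 3 fails.} Steps 1--2 are acceptable in outline as an appeal to the known reductions over $\mathbb{Q}$. In Step 2 no per-coordinate adjustment is needed: with $N=M^6$ you get $N^{d_i-1}\langle a_i,X\rangle^{d_i}=\langle M^{6(d_i-1)/d_i}a_i,X\rangle^{d_i}$. But these steps only produce \emph{some} non-invertible integral Keller map of quadratic-cubic linear form. Step 3, which is the actual content of the theorem, is only announced.

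The route you propose for it does not work. Theorem \ref{A} says nothing about the shape of its witness, and the property $\mathcal{C}_F\cap\mathbb{Z}^n=\{0\}$ is not carried through the reductions. Write $\mathcal{C}_F=F^{-1}(\Delta)$ with $\Delta$ the diagonal line.
\begin{itemize}
\item Pure stabilization, and composition on the source side with automorphisms defined over $\mathbb{Z}$, are harmless.
\item The Bass--Connell--Wright and Dru\.zkowski steps have the shape $\Phi\circ(F\times\mathrm{id})\circ\Psi$, with a polynomial automorphism $\Phi$ on the \emph{target} side. This replaces $\Delta$ by the curve $\Phi^{-1}(\Delta)$, which is no longer a line.
\item The Gorni--Zampieri pairing links the two maps only through non-invertible linear intertwiners $A\circ F=G\circ A$. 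So lattice points on $\mathcal{C}_G$ need not come from lattice points on $\mathcal{C}_F$.
\end{itemize}

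\textbf{The missing idea.} Do the arithmetic \emph{after} the reduction, using only moves that preserve the integral quadratic-cubic linear form. Start from a non-invertible Keller map $F$ of that form with $F(0)=0$.
\begin{enumerate}
\item \emph{Diagonal conjugation.} Take $D=\mathrm{diag}(N^{6k_1},\dots,N^{6k_n})$ with $K\le k_i\le 2K$. The conjugate $D^{-1}F(DX)$ has components $X_i+\langle b_i,X\rangle^{d_i}$ with $b_{ij}=a_{ij}N^{6k_j-6k_i/d_i}\in\mathbb{Z}$, because $6k_i/d_i\le 3k_i\le 6k_j$. It is still Keller, non-invertible, and fixes $0$.
\item \emph{Reduction to a line.} The map $X\mapsto DX$ injects the lattice points of its curve $\mathcal{C}$ into $F^{-1}(\mathbb{C}v)\cap\mathbb{Z}^n$, where $v=D(1,\dots,1)$.
\item \emph{Choice of $v$.} For suitable exponents $k_i$, the vectors $v$ obtained this way are Zariski dense. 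So you may take $v$ outside the asymptotic cone of the hypersurface $E_F$ of branched values.
\item \emph{Finiteness.} By the thinness property quoted in Section~\ref{sec1}, $F(\mathbb{Z}^n)$ lies in a bounded neighbourhood of $E_F$. That neighbourhood meets $\mathbb{C}v$ in a bounded set, so $F$ takes only finitely many integral values on $F^{-1}(\mathbb{C}v)\cap\mathbb{Z}^n$. By Theorem \ref{fiber-theorem}, this set is therefore finite.
\item \emph{Divisibility trick.} Finally conjugate by the scalar $M^6$, with $M$ larger than all coordinates of these finitely many points. The result is again of the required form, and only $0$ is left on $\mathcal{C}$.
\end{enumerate}
Without an argument of this kind, Step 3 remains open: you need finiteness for a suitable line, achieved by form-preserving changes, followed by a divisibility argument.
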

\noindent
This second theorem implies that the Jacobian Conjecture over $\mathbb{C}$ for all $n>1$ can be reduced to the question whether for every
Keller map $F\in\mathbb{Z}[X_1,X_2,\ldots,X_n]^n$ of quadratic-cubic linear form over $\mathbb{Z}$ the $6$-degree diophantine equation
$F_1(X_1,X_2,\ldots,X_n)^2+\ldots+F_{n-1}(X_1,X_2,\ldots,X_n)^2=0$ always has non-zero integer solutions. We note that while the diophantine
equations to be checked are quite simple, we have to verify the existence of integral solutions for all $n>1$. Also the reduction above as
well as Theorem \ref{B} verifies (or not) the Jacobian Conjecture over $\mathbb{C}$ for all dimensions $n\in\mathbb{Z}_{>0}$. Since the
Jacobian Conjecture over $\mathbb{C}$ is false for $n\ge 3$ we already know that the above tool (Theorem \ref{B}) is limited. \\
We are interested in the current paper only in the the Jacobian Conjecture in dimension two over a field $K$ of characteristic $0$. \\
We would like to replace the diophantine conditions to be checked, in order to get information on the two dimensional Jacobian Conjecture
over $\mathbb{C}$, by stricter conditions of diophantine type. Namely, instead of confining our condition to the existence/non-existence
of integral solutions, we will try to replace those conditions to - checking the existence of an infinite set of integral solutions. Also
we want our new sets of diophantine equations to belong only to the two dimensional case. The price we are willing to pay is that our
diophantine equations will be different than linear lines, or fibers of the coordinates of the polynomial mappings $F(X,Y)=(P(X,Y),Q(X,Y))$
which satisfy the Jacobian Condition:
$$
\frac{\partial P}{\partial X}\cdot\frac{\partial Q}{\partial Y}-\frac{\partial P}{\partial Y}\cdot\frac{\partial Q}{\partial X}\equiv
c\in\mathbb{C}^{\times}.
$$
So we will consider such mappings over $\mathbb{Z}$, i.e. $F\in\mathbb{Z}[X,Y]^2$ but our diophantine equations will not be $P(X,Y)=0$.
Every polynomial surjection of $\mathbb{Z}^n$, which satisfies $JF:=\det\,(\partial F_i/\partial X_j)\equiv 1$ is an automorphism of
$\mathbb{Z}^n$. That setting resembles Keller's original interest which led us later on to express the Jacobian Conjecture over fields of  characteristic zero. The above remarkable reduction to surjectivity was proved in \cite{vm1990}.

\begin{theorem}\label{thm-c2005} {\rm \cite{c2005}}
Let $F=(P,Q):\,\mathbb{C}^2\rightarrow\mathbb{C}^2$ be a polynomial map with integer coefficients, $F=(P,Q)\in\mathbb{Z}[X,Y]^2$, and
$JF\equiv 1$. If the complex plane curve $P=0$ has infinitely many points in $\mathbb{Z}[i]^2$, then $F$ has a polynomial inverse with
integer coefficients.
\end{theorem}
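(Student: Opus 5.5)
The natural route is to combine Siegel's theorem on integral points with the geometry of fibers of a Keller map. Let $\mathcal{O}=\mathbb{Z}[i]$ and $K=\mathbb{Q}(i)$. Since $P=0$ has infinitely many points in $\mathcal{O}^2$, and the curve has finitely many irreducible components, some $K$-irreducible component $C$ of $\{P=0\}$ contains infinitely many $\mathcal{O}$-integral points. By Siegel's theorem, the normalization $\tilde C$ of $C$ then has genus $0$, $C$ is geometrically irreducible (after passing to a finite extension if needed), and $C$ has at most two places at infinity.

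On the other hand, the Jacobian condition $JF\equiv 1$ gives $\nabla P\neq 0$ everywhere, so the fiber $P=0$ is smooth and $C=\tilde C$. Hence $C\cong\mathbb{C}$ or $C\cong\mathbb{C}^{\times}$. Moreover, the derivative of $Q$ along a tangent vector $v=(-P_Y,P_X)$ of $C$ equals $JF=1\neq 0$. So $q:=Q|_C$ is a regular function on $C$ whose differential vanishes nowhere.

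I would then split into two cases.

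\emph{Case $C\cong\mathbb{C}$.} Here $q$ is a polynomial in a parameter $t$ with $q'(t)$ nowhere zero, so $q$ has degree $1$. Consequently $C$ is a smooth rational curve with one place at infinity, embedded in $\mathbb{C}^2$ as an irreducible component of a fiber of $P$. By Abhyankar--Moh--Suzuki, $C$ is a coordinate line; after a polynomial automorphism we may take $C=\{X=0\}$. I would then use the standard fact that if a Keller map has a fiber component of $P$ which is a coordinate line on which $Q$ restricts injectively, then $P$ itself is a coordinate. To get this, write $P=X\cdot u$ and use the Jacobian identity along $X=0$ to show that $u$ is a constant and that $P$ is a variable. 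Once $P$ is a variable, the Jacobian mate $Q$ equals $cQ_0+h(P)$, so $F$ is invertible.

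\emph{Case $C\cong\mathbb{C}^{\times}$.} Here $q$ is a Laurent polynomial in $t$ whose derivative has no zeros on $\mathbb{C}^{\times}$. This forces $q=at^k+b$ for some $k\neq 0$, so $q$ is a $|k|$-sheeted unramified cover of $\mathbb{C}\setminus\{b\}$. I expect this case to be the main obstacle. The plan is to argue, as in Van Chau's work on the non-proper value set, that for a non-invertible Keller map the restriction of $Q$ to a fiber component of $P$ cannot omit a value while being unramified, because the omitted value $b$ would have to lie in the asymptotic (non-proper) value set. The first thing I would try is to analyze the Newton--Puiseux expansions at the two places of $C$ at infinity and show that both branches map properly under $Q$ unless $F$ has degree-one behaviour there. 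This would either force $|k|=1$ and a contradiction with $C\cong\mathbb{C}^{\times}$, or reduce to the previous case.

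Finally, the inverse $F^{-1}$ exists over $\mathbb{C}$ and, by Galois invariance, is defined over $\mathbb{Q}$. It has coefficients in $\mathbb{Z}$ because the formal inverse of a map with integer coefficients, constant Jacobian $1$ and $F(0)\in\mathbb{Z}^2$ is computed recursively degree by degree using only integer arithmetic, after inverting the linear part, which lies in $SL_2(\mathbb{Z})$.
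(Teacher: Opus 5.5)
Your Siegel reduction is sound: $C$ is a smooth, geometrically irreducible component of $\{P=0\}$, isomorphic to $\mathbb{C}$ or $\mathbb{C}^{\times}$, and $Q|_C$ is étale. The problem is that neither case is then closed.

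\textbf{The case $C\cong\mathbb{C}^{\times}$.} Your plan rests on something that is not a contradiction. If $Q|_C=at^k+b$, follow $C$ to the place at infinity where $t^k\to 0$. This shows that $(0,b)$ is automatically an asymptotic value of $F$, so $F$ is not proper and hence \emph{not} invertible. So this case has to be shown impossible. Saying ``the omitted value lies in the non-proper set'' merely restates non-invertibility. Likewise $|k|=1$ gives no contradiction, since $at+b$ is injective on $\mathbb{C}^{\times}$ and still omits $b$. Siegel cannot exclude this case over $\mathbb{Z}[i]$ either: the conic $X^2-2Y^2=1$ is isomorphic to $\mathbb{C}^{\times}$ and has infinitely many points in $\mathbb{Z}^2\subset\mathbb{Z}[i]^2$. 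Excluding it by geometry alone would amount to proving the plane Jacobian conjecture for every Keller map whose $P$ has a $\mathbb{C}^{\times}$ fibre component, and you do not do that.

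\textbf{The case $C\cong\mathbb{C}$.} This has the same defect in milder form. After moving $C$ to $\{X=0\}$ and writing $P=Xu$, the Jacobian identity on $X=0$ only gives $u(0,Y)\,Q_Y(0,Y)\equiv{\rm const}$. That means $u(0,Y)$ is constant and $Q(0,Y)$ is affine. It says nothing about $u$ off the line. Moreover, $u$ cannot be constant if $\{P=0\}$ has other components, because those components are exactly $\{u=0\}$. So you only get that $P$ is a variable when $C$ is the whole fibre. Your general ``standard fact'' is again a special case of the Jacobian conjecture and needs a proof or a precise reference.

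\textbf{What is missing.} The missing ingredient is arithmetic, not geometric. The paper does not reprove the theorem; it cites \cite{c2005}. But the mechanism it records is Remark~\ref{rem1} and Proposition~\ref{prop-c2005}. If $F$ is not invertible, then for every $p\in\mathbb{Z}[i]^2$ with $P(p)=0$, the Gaussian integer $Q(p)$ lies within distance $1$ of the set $\{v:(0,v)\in A_F\}$. This set is finite, since for a non-invertible Keller map no component of $A_F$ is a line $\{u=c\}$. Hence $Q$ takes only finitely many values on the Gaussian integer points of $P=0$. Each fibre of $F$ is finite, so there are only finitely many such points.

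The engine behind that estimate is the van den Dries--McKenna argument. The local inverse of $F$ at $F(p)$ has Taylor coefficients in $\mathbb{Z}[i]$, because the linear part is unimodular. Off $A_F$, $F$ is a covering, so once a polydisc of radius $>1$ around $F(p)$ misses $A_F$, the local inverse extends to it. Convergence on such a polydisc forces the integer coefficients to vanish eventually, so the inverse is a polynomial and $F$ is invertible.

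This route needs neither Siegel's theorem nor your case split, and it gives the uniform bound in $c$ mentioned after the statement. You could insert it into your $\mathbb{C}^{\times}$ case to rule that case out, but then the Siegel step becomes superfluous. Your final paragraph on the integrality of the inverse is correct.
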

\noindent
In fact the paper \cite{c2005} contains a proof of a stronger result. Namely, if $F=(P,Q)$ is not invertible, then the numbers of points
in $\mathbb{Z}[i]^2$ lying on the coordinate curves $P(X,Y)=c\in\mathbb{Z}[i]$ must be uniformly bounded (in $c$).

\begin{remark}\label{rem1}
van den Dries showed that if $F\in\mathbb{Z}[X,Y]^2$ is a counter-example to the Jacobian Conjecture ($JF\equiv 1$), then $F$ maps 
$\mathbb{Z}[i]^2$ into a narrow neighborhood of its exceptional value set $A_F$. Namely,
$$
{\rm dist}\left(F(p),A_F\right)=\inf_{q\in A_F}|F(p)-q|\le 1,\,\,\,\,\,\forall\,p\in\mathbb{Z}[i]^n.
$$
The exceptional value set $A_F$ of $F$ is the smallest subset $A_F\subseteq\mathbb{C}^n$ such that the restriction 
$F:\,\mathbb{C}^n-F^{-1}(A_F)\rightarrow\mathbb{C}^n-A_F$ is a un-branched covering.
\end{remark}

\begin{proposition}\label{prop-c2005} {\rm \cite{c2005}}
Let $F=(P,Q)\in\mathbb{Z}[X,Y]^2$ with Jacobian $JF\equiv 1$. If $F$ is not invertible, then $\hat{d}(F(p),A_F)\le 1$, 
$\forall\,p\in\mathbb{Z}[i]^2$. Here if $q=(a,b)\in\mathbb{C}^2$ and $V\subseteq\mathbb{C}^2$ we define:
$$
\hat{d}(q,V):=\max\,\left(\min\left\{|a-u||(u,b)\in V\right\},\min\left\{|b-v||(a,v)\in V\right\}\right).
$$ 
\end{proposition}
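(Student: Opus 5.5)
The plan is to prove the two coordinate estimates separately: for $p\in\mathbb{Z}[i]^2$ with $F(p)=(a,b)$, show that the horizontal line $\{(u,b):u\in\mathbb{C}\}$ meets $A_F$ at some $u$ with $|a-u|\le 1$, and symmetrically for the vertical line through $F(p)$. The mechanism is integrality: a polynomial with Gaussian-integer coefficients that has a Gaussian-integer root in a fiber forces the remaining data of that fiber into an arithmetically rigid position, while a non-invertible Keller map has fibers of non-maximal cardinality only over $A_F$.

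First I would record the standard facts for a non-invertible Keller map $F=(P,Q)\in\mathbb{Z}[X,Y]^2$. The set $A_F$ is a plane algebraic curve (possibly empty a priori, but nonempty since $F$ is not invertible), defined over $\mathbb{Q}$. It is the set of points where $F$ fails to be proper, and $F$ is an unramified covering of degree $d=\deg F\ge 2$ over $\mathbb{C}^2\setminus A_F$. The curve $A_F$ is a finite union of images of polynomial curves, so for each $b$ the set $\{u:(u,b)\in A_F\}$ is the zero set of a polynomial $\Delta_b(u)$ whose coefficients are polynomial in $b$ with rational (after clearing denominators, integral) coefficients.

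Next, fix $p=(x_0,y_0)\in\mathbb{Z}[i]^2$ and set $b=Q(p)\in\mathbb{Z}[i]$. I would restrict to the fiber curve $C_b=\{Q=b\}$ and to the function $P|_{C_b}:C_b\to\mathbb{C}$. By the Jacobian condition $P|_{C_b}$ is unramified on $C_b$. Every component of $C_b$ is a smooth affine curve, and the only obstructions to $P|_{C_b}$ being a covering of $\mathbb{C}$ are its asymptotic values, namely the $u$ with $(u,b)\in A_F$. If no such $u$ satisfied $|u-a|\le 1$, then $P|_{C_b}$ would be an unbranched covering over the disc $D(a,1)$. I would then try to show this contradicts integrality. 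The idea is to consider the polynomial $R_b(T)=\mathrm{Res}_Y(P(X,Y)-T,\,Q(X,Y)-b)$, or more precisely the characteristic polynomial of $P$ on the finite $\mathbb{C}[T]$-algebra $\mathbb{C}[C_b]$. Its leading coefficient in the relevant variable vanishes exactly at the asymptotic values $u_j$, so it has the form $c\prod_j(T-u_j)^{m_j}$ with $c\in\mathbb{Z}[i]$. Evaluating at $T=a$, and using that the root $x_0$ is integral, gives a nonzero element $c\prod_j(a-u_j)^{m_j}$ that should lie in $\mathbb{Z}[i]$ and have a divisibility property forcing $\prod_j|a-u_j|^{m_j}\le 1$. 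In the normalized situation the bound should sharpen to $\min_j|a-u_j|\le 1$.

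The main obstacle is this final step: making the leading-coefficient and discriminant argument give precisely $\min_j|a-u_j|\le 1$ rather than only a product bound. I would first try to show that after a Gaussian-integer translation the leading coefficient is monic up to a unit, using $JF\equiv1$ to control the content. If that fails, I would fall back on van den Dries' argument behind Remark \ref{rem1}: apply it to the restricted one-variable map and read off the coordinate-wise version, noting that $\hat d$ is exactly the coordinate projection of that distance estimate. The second coordinate then follows by exchanging the roles of $P$ and $Q$.
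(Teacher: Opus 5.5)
The paper gives no proof of this proposition; it only quotes it from \cite{c2005}. So your proposal has to stand on its own, and it breaks down at the decisive step.

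\textbf{The resultant route gives the wrong inequality.} Suppose $\ell(T)\in\mathbb{Z}[i][T]$ is the leading coefficient and $a\in\mathbb{Z}[i]$ is not a root. Then $\ell(a)$ is a nonzero Gaussian integer, so $|\ell(a)|\ge 1$. That is a \emph{lower} bound on $|c|\prod_j|a-u_j|^{m_j}$. An integral root $x_0$ constrains only the constant term and never forces $\ell(a)$ to be a unit, so no divisibility property yields $\le 1$.

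There are two further problems with this route. First, $\mathbb{C}[C_b]$ is finite over $\mathbb{C}[P]$ exactly when $P|_{C_b}$ has no asymptotic values, so the characteristic polynomial you want does not exist in the case that matters. Second, a product bound would already imply the min bound, so the obstacle you single out is not the real one.

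\textbf{The fallback is misdescribed.} $\hat d$ is not a coordinate projection of van den Dries' estimate. The estimate ${\rm dist}(F(p),A_F)\le 1$ only puts a point of $A_F$ in the unit polydisc around $F(p)$, not on the line $\{v=b\}$.

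\textbf{What works: radius of convergence on the line, plus a global input.} Since $DF(p)\in SL_2(\mathbb{Z}[i])$, the formal inverse of $F$ at $p$ has coefficients in $\mathbb{Z}[i]$. Restricting it to $v=b$ gives $\gamma(s)=p+\sum_{j\ge 1}c_js^j$ with $c_j\in\mathbb{Z}[i]^2$ and $F(\gamma(s))=(a+s,b)$.

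If $\{(u,b):|u-a|\le 1\}$ misses $A_F$, then by compactness a disc of radius $1+\epsilon$ on that line misses it too. $F$ is a covering over that simply connected disc, so $\gamma$ extends holomorphically to $|s|<1+\epsilon$. Because the $c_j$ are Gaussian integers, $\gamma$ must be a polynomial.

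\textbf{The missing idea is that this polynomial case is not yet a contradiction.} It only says that the component of $Q=b$ through $p$ is a copy of $\mathbb{C}$ on which $P$ is a coordinate. No local integrality argument at $p$ excludes this.

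\textbf{How to finish.} Observe that $\gamma(s)\in\mathbb{Z}[i]^2$ for every $s\in\mathbb{Z}[i]$. Apply the two-variable (polydisc) van den Dries--McKenna bound at each $\gamma(s)$. This gives points $(u_s,v_s)\in A_F$ with $|u_s-a-s|\le 1$ and $|v_s-b|\le 1$, an unbounded subset of $A_F$ inside a horizontal strip.

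That contradicts Van Chau's structure theorem for non-invertible Keller maps. Each component of $A_F$ is a polynomial curve $t\mapsto(\phi(t),\psi(t))$ with $\deg\phi/\deg\psi=\deg P/\deg Q$. So $\psi$ is non-constant, and $|\phi|\to\infty$ forces $|\psi|\to\infty$.

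The vertical estimate follows by exchanging the roles of the two coordinates.
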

\noindent
in \cite{c2023} Van Chau mentions the thinness of the image: If $F\in\mathbb{Z}[X_1,X_2,\ldots,X_n]^n$ with $JF\equiv 1$ is not an
invertible polynomial mapping $\mathbb{C}^n\rightarrow\mathbb{C}^n$ and if $E_F$ is the hypersurface of the branched values of $F$
then $F(\mathbb{Z}^n)$ is contained in a narrow neighborhood of $E_F$, namely 
$$
F(\mathbb{Z}^n)\subseteq E_F+\{a\in\mathbb{C}^n|\,|a_i|\le 1,\,i=1,2,\ldots,n\}.
$$
In the case $n=2$, the numbers of integer points on the curves $a\cdot F_1+b\cdot F_2=c$, $a,\,b,\,c\in\mathbb{Z}$ are uniformly
bounded, \cite{c2005}. For any set $\Omega\subseteq\mathbb{Z}^n$ we denote by $N(\Omega,B)$ the number of the points 
$a=(a_1,a_2,\ldots,a_n)\in\Omega$ with $|a_i|\le B,\,i=1,2,\ldots,n$.

\begin{theorem}\label{thm-c2023} {\rm \cite{c2023}}
Suppose $F\in\mathbb{Z}[X_1,X_2,\ldots,X_n]^n$ and $JF\equiv 1$. If $F$ is {\bf not} invertible, then \\
{\rm (i)} For generic lines $l$ in $\mathbb{Z}^n$, the cardinalities $|F^{-1}(l)\cap\mathbb{Z}^n|$ are uniformly bounded. \\
{\rm (ii)} $N(F(\mathbb{Z}^n),B)=O(B^{n-1})$ as $B\rightarrow+\infty$, where the implied constants (in the $O(\cdot)$)
depend on $F$.
\end{theorem}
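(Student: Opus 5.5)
The plan is to obtain both parts from the narrow-neighborhood property quoted above from \cite{c2023}. Since $F$ is not invertible, it is not injective on $\mathbb{C}^n$; otherwise Ax--Grothendieck together with Keller would give a polynomial inverse. Let $E_F\subsetneq\mathbb{C}^n$ be the set of branched (exceptional) values. It is a proper algebraic hypersurface, defined over $\mathbb{Q}$ because $F$ has integer coefficients. We may take $E_F\subseteq\{g=0\}$ for some nonzero $g\in\mathbb{Z}[Y_1,\ldots,Y_n]$. The key input is the inclusion
$$
F(\mathbb{Z}^n)\subseteq E_F+\{a\in\mathbb{C}^n:\,|a_i|\le 1\}.
$$
To prove it, I would argue as van den Dries does in Remark \ref{rem1}. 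Over $\mathbb{C}^n-E_F$ the map $F$ is an unbranched finite covering of degree $d\ge 2$. Consider an integer point $p$ with $q=F(p)$ at polydisc distance greater than $1$ from $E_F$. Then the unit polydisc $\Delta$ around $q$ lies in $\mathbb{C}^n-E_F$ and is simply connected. The local inverse branch $G$ through $p$ therefore extends to all of $\Delta$ and is holomorphic and bounded there. Its Taylor coefficients at $q$ are rational, since the inverse power series of an integral map with $JF\equiv 1$ has integer coefficients. By Cauchy's estimates on the unit polydisc these coefficients are bounded. Integer coefficients that are bounded must vanish for large degree, so $G$ is a polynomial. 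This forces $F$ to be invertible, a contradiction.

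For part (ii), the inclusion says every $b\in F(\mathbb{Z}^n)$ with $|b_i|\le B$ lies within distance $1$ of $V=\{g=0\}$. I would then count the integer points in the unit neighborhood of a hypersurface inside the box $[-B,B]^n$. Projecting along a generic coordinate direction, say after a linear change making $g$ monic in $Y_n$ of degree $m$, each fiber line $\{(b_1,\ldots,b_{n-1})\}\times\mathbb{C}$ meets $V$ in at most $m$ points. Each such point contributes only $O(1)$ integer values of $b_n$ within distance $1$. The count is therefore $O\bigl(m\,(2B+1)^{n-1}\bigr)=O(B^{n-1})$. Care is needed because the unimodular change of coordinates must preserve the lattice and the box up to constants; I will choose the direction in $\mathbb{Z}^n$ to ensure this.

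For part (i), take a generic integer line $l$. Then $l$ meets $V$ in at most $m=\deg g$ points, and these lie in a bounded region. I would also choose $l$ so that it is not asymptotic to $V$: its direction should avoid the projectivized cone of $V$ at infinity, which is a proper closed condition. Then the integer points of $l$ within distance $1$ of $V$ are uniformly bounded in number. For each such image point $c$, the preimage $F^{-1}(c)$ has at most $d$ points, since $F$ is étale with generic degree $d$ and fibers of an étale map are finite of size at most $d$. Hence $|F^{-1}(l)\cap\mathbb{Z}^n|\le d\cdot C$.

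The main obstacle is making ``generic'' and the uniformity rigorous. The uniformity in (i) must hold over infinitely many lines, not one fixed line. So I must bound the number of lattice points of $l$ in the $1$-neighborhood of $V$ independently of $l$. I would first try to do this by parametrizing $l(t)=u+tv$ and bounding the number of real $t$-intervals where $|g(l(t))|$ is small relative to $|\nabla g|$ by $O(m)$, via a Łojasiewicz/transversality estimate. This is where I expect most of the work.
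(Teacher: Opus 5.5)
The paper does not prove this statement; it only quotes it from \cite{c2023}, so your proposal has to stand on its own. Your derivation of $F(\mathbb{Z}^n)\subseteq E_F+\{|a_i|\le 1\}$ is the right van den Dries-type argument, with one slip. The inference that bounded integer coefficients vanish for large degree is false as stated: the coefficients of $1/(1-w)$ are bounded integers. Since the polydisc distance from $q$ to $E_F$ is $>1$, work on a polydisc of radius $r>1$. There Cauchy gives $|c_\alpha|\le Mr^{-|\alpha|}\to 0$, which does force vanishing.

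\textbf{Gap in (i).} The genuine gap is in part (i): the ambient $1$-neighbourhood of $E_F$ is too coarse. The number of lattice points of a generic line within sup-distance $1$ of a hypersurface is not uniformly bounded, whatever transversality or asymptotic-direction conditions you impose. Nothing in your argument keeps $E_F$ from containing such a hypersurface.

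For example, take $n=2$ and $V=\{Y_2=Y_1^2\}$, with integers $c\ge 1$, $a\ge 0$, $R$ large and $0\le k\le R$. The line through $u=(a,\,a^2+k-c^2R^2)$ with direction $v=(c,\,2c^2R+2ac)$ has lattice points $b=u+tv$ satisfying $b_2-b_1^2=k-c^2(t-R)^2$. For $|t-R|\le\sqrt R/c$, the intermediate value theorem gives a real $s\in[-1,1]$ with $(b_1+s,b_2)\in V$. So about $2\sqrt R/c$ lattice points of $l$ lie in the neighbourhood. This happens even though $l$ meets $V$ transversally and avoids the asymptotic direction $(0,1)$. Moreover, these $(u,v)$ are Zariski dense, so no genericity condition removes them. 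Your proposed Łojasiewicz estimate cannot help: the problem is the length of one near-tangent stretch, not the number of stretches.

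\textbf{Fix for (i).} Run your Cauchy argument inside $l$, in its own coordinate. Write $l=u+\mathbb{C}v$ with $u\in\mathbb{Z}^n$ and $v$ primitive, so that $l\cap\mathbb{Z}^n=u+\mathbb{Z}v$. If $F(p)=u+t_0v$ with $p\in\mathbb{Z}^n$, the branch $t\mapsto F^{-1}(u+tv)$ through $p$ equals $p+H((t-t_0)v)$. Here $H$ is the integral formal inverse of $F(p+w)-F(p)$, so the branch has integer coefficients in $t-t_0$. Let $t_i$ be the parameters with $u+t_iv\in E_F$; there are at most $m=\deg E_F$ of them. If $|t_0-t_i|>1$ for all $i$, the branch extends to a disc of radius $>1$ and becomes a polynomial section of $F$ over $l$, i.e.\ a component of $F^{-1}(l)$ of degree one over $l$. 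But for generic $l$, $F^{-1}(l)$ is irreducible (Bertini) and of degree $d\ge 2$ over $l$, a contradiction. Hence $t_0$ is one of at most $3m$ integers, and $|F^{-1}(l)\cap\mathbb{Z}^n|\le 3md$.

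\textbf{Gap in (ii).} Your count has the same defect. The point of $V$ within distance $1$ of $b$ need not lie on the fibre line through $(b_1,\dots,b_{n-1})$. So a single fibre can contribute far more than $O(1)$ values of $b_n$; for the parabola it contributes about $4|b_1|$. The total may still be $O(B^{n-1})$, but that needs a genuine counting lemma you have not supplied.

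It is easier to deduce (ii) from the corrected (i). Fix a generic primitive $v\in\mathbb{Z}^n$ and cover the lattice points of $[-B,B]^n$ by the $O(B^{n-1})$ lattice lines parallel to $v$. Each generic line carries at most $3m$ points of $F(\mathbb{Z}^n)$. The exceptional lines correspond to lattice points of a proper algebraic subset of $\mathbb{C}^n/\mathbb{C}v$, so there are only $O(B^{n-2})$ of them, each with $O(B)$ points in the box. This gives $N(F(\mathbb{Z}^n),B)=O(B^{n-1})$.
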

\noindent
In Theorem \ref{thm-c2023}, by generic lines in $\mathbb{Z}^n$ it is meant that the lines $\{u+t\cdot v|\,t\in\mathbb{C}\}$ for
$(u,v)\in\mathbb{Z}^n\times\mathbb{Z}^n$ lying outside an algebraic surface in $\mathbb{C}^n\times\mathbb{C}^n$ is Zariski closed.
\\
\begin{remark}\label{rem2}
The estimate in Theorem \ref{thm-c2023} is a sharp estimate.
\end{remark}

\begin{corollary}\label{cor-c2023}
Suppose $F\in\mathbb{Z}[X_1,X_2,\ldots,X_n]^n$ and $JF\equiv 1$. The following are equivalent: \\
{\rm (i)} $F$ has an inverse in $\mathbb{Z}[X_1,X_2,\ldots,X_n]^n$. \\
{\rm (ii)} $N(F(\mathbb{Z}^n),B)\gg B^{n-\delta}$ as $B\rightarrow +\infty$ for some $\delta<1$. \\
Here $\alpha\ll\beta$ means that $\alpha\le c\cdot\beta$ for some fixed $c>0$.
\end{corollary}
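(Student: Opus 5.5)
The two implications will be proved separately. Theorem \ref{thm-c2023}(ii) supplies the upper bound needed for (ii)$\Rightarrow$(i). For (i)$\Rightarrow$(ii), the point is that an integral inverse makes $F$ a bijection of $\mathbb{Z}^n$, and a degree bound on the inverse then controls how large the preimages of small points can be.

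\emph{(i)$\Rightarrow$(ii).} Let $G\in\mathbb{Z}[X_1,\ldots,X_n]^n$ with $F\circ G=G\circ F={\rm id}$. Then $F(\mathbb{Z}^n)\subseteq\mathbb{Z}^n$ and $G(\mathbb{Z}^n)\subseteq\mathbb{Z}^n$. Since $F(G(a))=a$ for every $a\in\mathbb{Z}^n$, we get $F(\mathbb{Z}^n)=\mathbb{Z}^n$. Therefore $N(F(\mathbb{Z}^n),B)=N(\mathbb{Z}^n,B)=(2\lfloor B\rfloor+1)^n\gg B^n$. This is $\gg B^{n-\delta}$ for any $\delta<1$, for instance $\delta=0$. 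This direction is immediate and needs no asymptotic input beyond counting lattice points in a box.

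\emph{(ii)$\Rightarrow$(i).} Argue by contraposition. Suppose $F$ has no inverse in $\mathbb{Z}[X_1,\ldots,X_n]^n$. We first need that $F$ is then not invertible as a polynomial map of $\mathbb{C}^n$ at all. If $F$ had a complex polynomial inverse $G$, then $G$ would be the formal inverse of $F$. Its coefficients are obtained recursively from the coefficients of $F$, working in the formal power series ring; this requires $F(0)=0$, which we arrange by translating by the integer vector $F(0)$. Since $JF\equiv1$, the linear part $DF(0)$ has integer entries and determinant $1$, so its inverse is an integer matrix. Hence the formal inverse has coefficients in $\mathbb{Z}$, and so does $G$.

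So $F$ is not invertible over $\mathbb{C}$, and Theorem \ref{thm-c2023}(ii) gives $N(F(\mathbb{Z}^n),B)\le C\,B^{n-1}$ for large $B$. If also $N(F(\mathbb{Z}^n),B)\ge c\,B^{n-\delta}$ with $\delta<1$, then $c\,B^{1-\delta}\le C$ for all large $B$. This is impossible because $1-\delta>0$. Hence (ii) forces (i).

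The only delicate point is the integrality step just described: non-invertibility over $\mathbb{Z}$ must be upgraded to non-invertibility over $\mathbb{C}$, which is the hypothesis of Theorem \ref{thm-c2023}. The formal-inverse argument, using that $\det DF(0)=1$, is what I would use for this. Everything else reduces to comparing exponents.
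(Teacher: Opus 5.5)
Your proof is correct, and it is the derivation the paper has in mind: the paper states this corollary without proof, as an immediate consequence of Theorem \ref{thm-c2023}(ii) for one direction and of the bijectivity of $F$ on $\mathbb{Z}^n$ for the other. Your formal-inverse argument, using $\det DF(0)=1$ to show that a complex polynomial inverse of such an $F$ automatically has integer coefficients, supplies the one step the paper leaves unstated.
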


\section{The finiteness of integral points on any asymptotic path of a $\mathbb{Z}[X,Y]^2$ counterexample to the Jacobian Conjecture
$\mathbb{C}^2\rightarrow\mathbb{C}^2$.}\label{sec2}

As seen in the previous in Section \ref{sec1} there are close connections between mappings $F\in\mathbb{Z}[X_1,X_2,\ldots,X_n]^n$ that 
satisfy the Jacobian condition $JF\equiv 1$, which are not polynomially invertible $\mathbb{C}^n\rightarrow\mathbb{C}^n$, and the
diophantine solvability in two types of diophantine systems: \\
\\
(1) The coordinates of $F$ system:
$$
F_1(X_1,X_2,\ldots,X_n)=F_2(X_1,X_2,\ldots,X_n)=\ldots=F_n(X_1,X_2,\ldots,X_n)\,\,\,\,\,\,\,\,\,\,\,\,\,\,\,(\mathcal{C}_F)
$$
has no non-trivial integral solutions. \\ 
\\
(2) The generic lines systems: \\
For generic lines $l$ in $\mathbb{Z}^n$, the cardinalities $|F^{-1}(l)\cap\mathbb{Z}^n|$ are uniformly bounded. \\
\\
We now restrict our attention to polynomial mappings in dimension two, either $F\in\mathbb{C}[X,Y]^2$ or $F\in\mathbb{Z}[X,Y]^2$ that
satisfy the Jacobian condition
$$
\frac{\partial P}{\partial X}\cdot\frac{\partial Q}{\partial Y}-\frac{\partial P}{\partial Y}\cdot\frac{\partial Q}{\partial X}\equiv 1,
$$
which are not polynomially invertible. We will start by giving a standard construction of all the asymptotic paths of a polynomial mapping 
that generate the full set of its asymptotic values. One purpose of this detailed construction is to examine the relations between the coefficients of the polynomial mapping and the coefficients of the algebraic parametric equations that describe the asymptotic paths. Since 
we want those relations to be simple, we will avoid fractional powers (which are typical in computations that involve Puiseaux expansions).
{\bf An $F$-asymptotic value $(a,b)\in\mathbb{C}^2$} is a limiting value of $F:\,\mathbb{C}^2\rightarrow\mathbb{C}^2$ along a smooth path
that tends to infinity. The set of all the $F$-asymptotic values is called {\bf the $F$-asymptotic variety}. We denote this set by $A(F)$.
(Notice that the exceptional value set $A_F$ of $F$ is different from $A(F)$. This later set was used by van den Dries. See remark 
\ref{rem1}.) Any smooth curve as above is called {\bf an asymptotic path of $F$ that corresponds to $(a,b)$}.

\begin{remark}\label{rem3} 
Sometimes the definition of asymptotic value uses instead of smooth paths that tend to infinity sequences $(x_n,y_n)\in\mathbb{C}^2$ that
tend to infinity, such that their $F$-image, $\{F(x_n,y_n)\}_{n=1}^{\infty}$ tend to $(a,b)$. Do we get the same notion of asymptotic values
when we use asymptotic paths versus when using sequences $\{(x_n,y_n)\}_{n=1}^{\infty}$? \underline{\bf NO!} Let $F(z)=e^z$. Then the only
asymptotic value of $F(z)$ using smooth paths is $0=0+i0$. Any path $x(t)+iy(t)$, $t>0$, where $x(t)^2+y(t)^2\rightarrow\infty$ and where
$x(t)<0\,\,\,\forall\,t>0$ while $|y(t)|$ is a bounded function is an asymptotic path of $F(z)$. On the other hand the total set of
sequential asymptotic values of $F(z)$ is huge. It equals $\mathbb{C}$. For let $z_0\in\mathbb{C}^{\times}$. Then $z_0=R_0e^{i\theta_0}$
for some $R_>0$. We take the asymptotic sequence $a_n=\left\{\log\,R_0+i(\theta_0+2\pi\cdot i\cdot n\right\}_{n=1}^{\infty}$. For the 
sequential asymptotic value $z_0$ we take $a_n=-n$, say.
\end{remark}
\noindent
However, we have for polynomial mappings the following,

\begin{theorem}\label{sequential-path-asymptotics}
If $F(X,Y)=(P(X,Y),Q(X,Y))\in\mathbb{C}[X,Y]^2$, then the set of all the $F$-asymptotic values along smooth asymptotic paths $A(F)$ is 
identical with the set of all the $F$-sequential asymptotic values (denoted by) $A_S(F)$.
\end{theorem}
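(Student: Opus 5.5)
The inclusion $A(F)\subseteq A_S(F)$ is immediate. If $\gamma(t)\to\infty$ is a smooth path with $F(\gamma(t))\to(a,b)$, then choosing $t_n\to\infty$ gives a sequence $\gamma(t_n)\to\infty$ with $F(\gamma(t_n))\to(a,b)$. The work is in the reverse inclusion $A_S(F)\subseteq A(F)$. The plan is to replace the sequence by a real algebraic (semialgebraic) set and apply the curve selection lemma at infinity. Polynomiality is exactly what rules out the pathology of Remark \ref{rem3}.

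Let $(a,b)\in A_S(F)$ and fix a sequence $p_n=(x_n,y_n)\to\infty$ with $F(p_n)\to(a,b)$. Identify $\mathbb{C}^2$ with $\mathbb{R}^4$ and consider
$$
S=\left\{(p,\varepsilon)\in\mathbb{R}^4\times\mathbb{R}_{>0}:\ |F(p)-(a,b)|^2<\varepsilon^2,\ \varepsilon^2\|p\|^2>1\right\}.
$$
Since $|F(p)-(a,b)|^2$ and $\|p\|^2$ are real polynomials, $S$ is semialgebraic. The sequence shows that for every $\varepsilon>0$ there are points $(p,\varepsilon)\in S$, so $(0^{\ldots},0)$ lies in the closure of the projection onto the $\varepsilon$-coordinate. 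The cleaner route is to apply the semialgebraic \emph{curve selection lemma at infinity} directly to
$$
S'=\{p\in\mathbb{R}^4:\ |F(p)-(a,b)|\cdot\|p\|<1\}\cup\ldots
$$
This particular set can fail, because the rate of convergence is not controlled. So I would instead use the one-point compactification. Map $\mathbb{R}^4$ semialgebraically onto a neighbourhood of $\infty$ via the inversion $\iota(p)=p/\|p\|^2$, and consider the semialgebraic set
$$
T=\{(\iota(p),F(p)):\ p\neq 0\}\subseteq\mathbb{R}^4\times\mathbb{R}^4 .
$$
Then $(0,(a,b))$ lies in the closure $\overline{T}$, witnessed by $(\iota(p_n),F(p_n))$. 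The curve selection lemma, in Milnor's form or the Bochnak--Coste--Roy form, gives a real-analytic (in particular semialgebraic and smooth) arc $\beta:[0,1)\to\mathbb{R}^8$ with $\beta(0)=(0,(a,b))$ and $\beta(s)\in T$ for $s>0$. Writing $\beta(s)=(\iota(\gamma(s)),F(\gamma(s)))$ with $\gamma(s)=\iota^{-1}(\beta_1(s))$, which is smooth since $\iota$ is an analytic involution of $\mathbb{R}^4\setminus\{0\}$, we get $\|\gamma(s)\|\to\infty$ and $F(\gamma(s))\to(a,b)$ as $s\to0^+$. Reparametrizing by $t=1/s$ yields a smooth asymptotic path, so $(a,b)\in A(F)$.

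The only real obstacle is justifying the curve selection step. It needs $T$ to be semialgebraic, and the accumulation point must be approached from within $T$ itself, not merely along some sequence. Both hold: $T$ is the image of the semialgebraic set $\mathbb{R}^4\setminus\{0\}$ under the semialgebraic map $p\mapsto(\iota(p),F(p))$, so Tarski--Seidenberg applies, and closure membership is exactly what the sequence supplies. I would also remark that the exponential counterexample fails precisely here, since the graph of $e^z$ is not semialgebraic. If one prefers a complex-algebraic argument instead, an alternative is to take the Zariski closure of the graph of $F$ in $\mathbb{P}^2\times\mathbb{P}^2$. Then $((\text{point at infinity}),(a,b))$ lies in the boundary curve, and one obtains an analytic branch (a Puiseux parametrization) through it. This gives a holomorphic path $\tau\mapsto\gamma(\tau)$ with a pole at $\tau=0$ along which $F\to(a,b)$. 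I would keep this as a backup, since it matches the Puiseux-type construction of asymptotic paths the paper announces next.
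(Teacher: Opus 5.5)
Your argument is correct, and it takes a genuinely different route from the paper. The paper proves the statement in two lines by appealing to the author's theory of maximal domains of injectivity. For \'etale maps the tiling of $\mathbb{C}^2$ by $F$-maximal domains is finite, and this is asserted to reduce the question to ``the one dimensional theory''. As written, that argument only addresses \'etale $F$, although the theorem is stated for every $F\in\mathbb{C}[X,Y]^2$. The one-variable step is also not spelled out. What the paper's route buys is coherence with the tiling machinery the author uses elsewhere.

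Your proof uses nothing but the semialgebraicity of the graph of $F$. You do not even need Tarski--Seidenberg. With $d=\deg F$, $T$ is the intersection of the real algebraic set $\{(u,w):\|u\|^{2d}w=\|u\|^{2d}F(u/\|u\|^2)\}$ (polynomial once denominators are cleared) with the open set $\{\|u\|^2>0\}$. That is exactly the form required by Milnor's curve selection lemma, which returns an arc real-analytic on $[0,\epsilon)$. This proves the theorem in the generality stated, and in fact for any polynomial map $\mathbb{R}^n\to\mathbb{R}^m$ with no Jacobian hypothesis. It also pinpoints why the $e^z$ example of Remark \ref{rem3} fails.

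For a clean write-up:
\begin{itemize}
\item Delete the abandoned sets $S$ and $S'$; the display for $S'$ ends in a dangling $\cup\ldots$.
\item If you cite the Bochnak--Coste--Roy version, which gives only a continuous semialgebraic arc, add that such an arc is Nash, hence smooth, on some $(0,\epsilon)$.
\item Since $\beta_1$ is real-analytic and nonconstant, you may shrink $\epsilon$ so that $\beta_1'\neq 0$ on $(0,\epsilon)$. This makes the asymptotic path regular, should that be required.
\end{itemize}
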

\noindent
{\bf Proof.} \\
We use the theory of maximal domains of injectivity of the polynomial mappings $F:\,\mathbb{C}^2\rightarrow\mathbb{C}^2$. For \'etale
mappings the $\mathbb{C}^2$-tilling by $F$-maximal domains is a finite tiling, i.e. the number of tiles is finite. This implies that
the one dimensional theory works and it implies that $A(F)=A_S(F)$.      $ \qed $ \\
\\
We now bring a basic theorem the gives the structure of the asymptotic variety $A(F)$, of a polynomial mapping 
$F:\,\mathbb{C}^2\rightarrow\mathbb{C}^2$. For the readers convenience we included a full detailed proof as appeared in
\cite{p2005}.

\begin{theorem}\label{A(F)}
If $F$ is a two dimensional polynomial mapping, and if $F$ satisfies {\bf the $Y$-degree condition}:
$\deg F=\deg_Y P=\deg_Y Q$, $(F=(P,Q)$) then $\forall\,(a,b)\in A(F)$ there exists a rational
mapping: $R(X,Y)=(X^{-\alpha},X^{\beta}Y+X^{-\alpha}\phi(X))$, ($\alpha\in\mathbb{Z}^+,\,\beta\in
\mathbb{Z}^+\cup\{0\},\,\phi(X)\in\mathbb{C}[X],\,\deg\phi<\alpha+\beta$ and the gcd of the powers of $X$ that
effectively appear in $X^{\alpha+\beta}Y+\phi(X)$ is $1$) with the following two properties: \\
(i) $\exists\,G_R\in\mathbb{C}[X,Y]^2$ such that $G_R=F\circ R$ off ${\rm sing}(R)=\{X=0\}$. (The
mapping $G_R$ is called {\bf the $R$-dual of $F$}) \\
(ii) $\exists\,Y_0\in\mathbb{C}$ such that $(a,b)=G_R(0,Y_0)$. \\
Moreover, the set of all those rational mappings $R$ can be chosen to be a finite set.
\end{theorem}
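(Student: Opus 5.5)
Start with an asymptotic value $(a,b)\in A(F)$. By Theorem \ref{sequential-path-asymptotics} it is realized along a path, and after a standard curve-selection argument along an algebraic curve. So there is a sequence $(x_n,y_n)\to\infty$ with $F(x_n,y_n)\to(a,b)$, and I may take it on an irreducible algebraic curve $\Gamma\subset\mathbb{C}^2$ passing through infinity. The first aim is to see which end of $\Gamma$ can carry such values.

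The $Y$-degree condition $\deg F=\deg_Y P=\deg_Y Q$ means the top homogeneous parts of $P$ and $Q$ contain $Y^{\deg F}$ with nonzero coefficients. Hence if $x_n$ stays bounded while $y_n\to\infty$, then $|F(x_n,y_n)|\to\infty$. Along any branch producing a finite asymptotic value we therefore have $x\to\infty$. Take a Puiseux expansion of such a branch at infinity: with the local parameter $s=x^{-1/m}$, it reads $y=\sum_{k\le K} c_k x^{k/m}+\dots$. Set $X=s$, so $x=X^{-\alpha}$ with $\alpha=m$.

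Truncate the Puiseux series at the first exponent after which $P$ and $Q$ stop depending on the tail in the limit, and introduce a new free variable $Y$ in place of that tail. This gives the substitution $y=X^{-\alpha}\phi(X)+X^{\beta}Y$, where $\phi\in\mathbb{C}[X]$ and $\deg\phi<\alpha+\beta$. Writing $x=X^{-\alpha}$, $y=X^{-\alpha}(\phi(X)+X^{\alpha+\beta}Y)$, the gcd condition on the exponents in $X^{\alpha+\beta}Y+\phi(X)$ is exactly the requirement that $m$ be the ramification index, so the parametrization is not a reparametrization of a smaller one.

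The key step, and I expect it to be the main obstacle, is choosing $\beta$ so that $F\circ R$ has no negative powers of $X$. The plan is to expand $P(X^{-\alpha},X^{-\alpha}\phi+X^\beta Y)$ as a Laurent polynomial in $X$ with coefficients in $\mathbb{C}[Y]$, and to look at its Newton polygon, following the construction in \cite{p2005}. Since $P$ tends to the finite value $a$ along the branch, the order-$0$ data of the Puiseux root forces all negative $X$-powers to vanish once $\beta$ is large enough. The argument has to respect the obstruction that $Y$-dependent terms must not reappear in negative order, and it has to do so simultaneously for $Q$; I would take $\beta$ to be the maximum of the two thresholds. This yields $G_R\in\mathbb{C}[X,Y]^2$ with $G_R=F\circ R$ off $\{X=0\}$, which is (i). For (ii), the original branch corresponds to $Y\to Y_0$ for some value $Y_0$ (the next Puiseux coefficient, suitably normalized) as $X\to0$, so $(a,b)=G_R(0,Y_0)$ by continuity.

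Finiteness: the relevant branches are the branches at infinity of the curves along which the asymptotic values occur. These come from the finitely many points at infinity, and the finitely many Newton-polygon edges of $P-a$, $Q-b$ viewed as polynomials in $y$ over $\mathbb{C}((x^{-1/m}))$. Only finitely many truncation patterns $(\alpha,\beta,\phi)$ arise, with $a,b$ varying inside $G_R(0,\cdot)$. So one finite family of maps $R$ covers all of $A(F)$.
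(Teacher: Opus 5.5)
\textbf{The gap: the finiteness clause.} Your data $(\alpha,\beta,\phi)$ are read off from an arbitrary branch. That branch is chosen by curve selection, separately for each of a continuum of asymptotic values, and such data are not confined to any finite set.
\begin{itemize}
\item Once some $R=(X^{-\alpha},X^{-\alpha}\phi+X^{\beta}Y)$ has a polynomial dual, every path $X\mapsto R(X,Y(X))$ is an asymptotic branch. So the Puiseux coefficients beyond the first admissible level are completely free, and truncating at a uniformly large $\beta$ produces infinitely many distinct $R$.
\item Choosing $Y(X)=X^{1/k}$ gives algebraic asymptotic branches of ramification index at least $k$. So even $\alpha=m$ is unbounded.
\end{itemize}
Your appeal to ``finitely many Newton-polygon edges of $P-a$, $Q-b$'' does not repair this. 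An asymptotic branch for $(a,b)$ need not lie on $\{P=a\}$ or $\{Q=b\}$, and the roots of those polygons move with $(a,b)$ over a continuum. What you need is a canonical minimal truncation, together with a proof that only finitely many minimal truncations occur.

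The paper supplies this by running the Newton--Puiseux iteration on $F$ itself rather than on branches:
\begin{itemize}
\item From $F(1/U,V/U)=A(U,V)/U^N$, a finite limit forces $V$ to a common zero $a_0$ of $A_0$, and there are finitely many such zeros.
\item The substitution $V=a_0+WU^p$, $U=Z^c$, with $p=\min\{j/(p_0-p_j)\}$, returns an expression $D(Z,W)/Z^{cN-b_0p_0}$ of the same shape.
\item At each stage one branches only over the finitely many zeros of the new leading pair $D(0,W)$.
\item Each step strictly lowers either the multiplicity $p_0$ or the exponent of the denominator.
\end{itemize}
Hence the tree is finite, and its leaves give the finitely many $R$. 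Every asymptotic value is caught because a finite limit is possible only along the substitutions chosen.

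\textbf{What does work: the single-value construction.} Here your branch-truncation route differs from the paper's, and the step you flag as the main obstacle closes with a pole count. Write the branch as $\gamma(X)=(X^{-m},\psi(X))$.
\begin{itemize}
\item The $Y$-degree condition forces not only $x\to\infty$ but also $y=O(x)$ along the branch, since otherwise $P\sim c\,y^N$. You need this for $\phi$ to be a polynomial, and it means $\psi$ has a pole of order at most $m$.
\item For $R_\beta=(X^{-m},\psi_{<\beta}+X^{\beta}Y)$, the coefficient of $Y^j$ in $F\circ R_\beta$ is $X^{\beta j}\partial_y^jF(X^{-m},\psi_{<\beta})/j!$. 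Its order is at least $\beta j-m(N-j)$, which is $\ge 0$ for $j\ge1$ as soon as $\beta\ge m(N-1)$.
\item The $Y^0$ coefficient equals $F(\gamma)$ minus $\sum_{j\ge1}\partial_y^jF(X^{-m},\psi_{<\beta})(\psi-\psi_{<\beta})^j/j!$. Each of these terms has nonnegative order, and $F(\gamma)$ is bounded, hence holomorphic. So this Laurent polynomial has no negative powers either.
\end{itemize}
This gives (i), and (ii) with $Y_0=c_\beta$.
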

\noindent
{\bf Proof.} \\
We consider the transformed  mapping:
$$
F_1(U,V)=F\left(\frac{1}{U},\frac{V}{U}\right)=\frac{A(U,V)}{U^{N}},
$$
where $N=\deg F$, $A(U,V)\in\mathbb{C}[U,V]^2$, $\deg A(U,V)=\deg A(0,V)=N$. Any asymptotic value of $F$ 
is a limiting value of $F_1(U,V)$ as $U\rightarrow 0$ and $V$ remains bounded. 
We use the following representation:
$$
F_1(U,V)=\frac{\sum_{j=0}^{N} A_{j}(V)U^{j}}{U^{N}}.  \eqno(*)
$$
In this notation $A_j(V)\in\mathbb{C}[V]^2$, $0\le j\le N$. These are the two polynomial coefficients of $U^j$.
When $U\rightarrow 0$ and $V$ remains bounded we will have $F_1(U,V)\rightarrow
\infty$ unless $V$ tends to a zero of the free term $A_{0}(V)$. This means a simultaneous zero of the 
pair of polynomials $A_0(V)$. The reason is that if $V$ is bounded away from zeros of $A_{0}(V)$, 
then the term $A_{0}(V)/U^{N}$ dominates the finite sum $F_1(U,V)$ (in equation (*)) when $U\rightarrow 0$.
We conclude that in order to determine all the possible
asymptotic values of $F$, we should consider the limits of $F_1(U,V)$ when
$U\rightarrow 0$ and $V\rightarrow a_0$, where $a_0$ is a zero of $A_{0}$. 
Let us represent the coefficients of $A$ as follows, $ A_{j}(V)=(V-a_0)^{p_{j}}B_{j}(V),\,\,\,\,\,\,0\le j\le N$.
If $A_{j}\equiv (0,0)$ we agree that $p_{j}=\infty $ and otherwise $p_{j}\ge 0$
and $B_{j}(a_0)\ne (0,0)$. Next we make the transformation, $V=a_0+WU^{p}$,
where $p$ will be a positive number that will be determined soon (in fact it is going
to be rational). Putting together everything we obtain,
$$
F_1(U,V)=\frac{\sum_{j=0}^{N} B_{j}(V)W^{p_{j}}U^{pp_{j}+j}}{U^{N}}.
$$
We want to determine the desired value of $p$ that will lead to a finite limiting value
of $F_1(U,V)$. If $p$ is a small positive number, then $\forall\,1\le j\le N$, $pp_{0}<pp_{j}+j$.
Thus the term that will dominate the finite sum that represents $F_1(U,V)$ is $B_{0}(a_0)W^{p_{0}}/
U^{N-pp_{0}}$. If $pp_{0}=N$ and $pp_{0}<pp_{j}+j$ for $1\le j\le N$, then
with this choice of $p=N/p_{0}$ we obtain the family of limiting values
$B_{0}(a_0)W^{p_{0}}$. In the complementary case, we choose $p>0$ so 
that $pp_{0}\le pp_{j}+j$ for $1\le j\le N$ with equality for at least
one value $j_{0}$ of $j$. We can write this choice of a value for $p$ as follows,
$$
p=\min\left\{\frac{j}{p_{0}-p_{j}}\,|\,1\le j\le N,\,p_{0}>p_{j}\right\}.
$$
We substitute this and obtain:
$$
F_1(U,a_0+WU^{p})=\frac{C(U,W)}{U^{N-pp_{0}}},
$$
where $C(U,W)$ is a mapping with coordinates that are polynomial in $U$ and $U^{p}$. It is also a polynomial in
$W$ of degree $N$ or less. Finally $C(0,W)$ is a polynomial in $W$ of degree 
$p_{0}$, which contains only the powers $W^{p_{j}}$ that satisfy the condition
$pp_{0}=pp_{j}+j$. Let $p=b_0/c$ where $b_0$ and $c$ are coprime positive integers.
Then to avoid fractional powers we make further the substitution $U=Z^{c}$.
We get,
$$
F_1(Z^{c},a_0+WZ^{b_0})=\frac{D(Z,W)}{Z^{cN-b_0p_{0}}}.   \eqno(**)
$$
Here $D(Z,W)$ is a polynomial pair in $Z$ and $W$ and $D(0,W)=C(0,W)$ is a polynomial
pair in $W$ of degree $p_{0}$. We noted above that the powers $W^{p_{j}}$ that
appear in $D(0,W)$ satisfy $pp_{0}=pp_{j}+j$. Hence for these $j$'s
$$
p_{0}-p_{j}=\frac{c\cdot j}{b_0}.
$$
Since $b_0$ and $c$ are coprime and since $p_{0}-p_{j}\in\mathbb{Z}$, $b_0$ must be a
divisor of $j$, so that the difference $p_{0}-p_{j}$ is a multiple of $c$. We
conclude that $D(0,W)$ is of the form $D(0,W)=W^{\alpha}E(W^{c})$, where $\alpha\in\mathbb{Z}^{+}\cup\{ 0\}$, 
and $E(X)$ is a polynomial pair in $X$ of degree 1 or more. \\
We see that the form of $F_1$ after our transformations, as given in equation
(**) is of the same type in the indeterminates $Z$ and $W$ as it was in equation
(*) in the indeterminates $U$ and $V$. Thus we can repeat the sequence of
transformations with suitable parameters. We conclude that any limiting value
of $F_1$ will be attained along a curve of the form $V=a_0+WU^{p}$, where 
$U\rightarrow 0$ and $W$ being bounded and $p$ is a positive rational number.
If the parameter $p$ is chosen to be smaller than our minimum value formula
then $F_1\rightarrow\infty $ along the curve. So finite limiting values for
$F_1$ can be achieved only for this value or larger ones for $p$. So with that
minimum chosen value for $p$, asymptotic values of $F_1$ that correspond to
the zero $a_0$ of the pair $A_{0}$ are achieved when $Z\rightarrow 0$ and $W$ being bounded.
If the value of $p$ is chosen to be larger than the minimum then, in fact
$W\rightarrow 0$. On repetition of the process, each asymptotic value will 
correspond to a zero $a_{1}$ of $D(0,W)$. If the multiplicity of the zero $a_{1}$ 
is $q_{0}$, then the transformation we apply is $W=a_{1}+SZ^{q}$, where $q$
is chosen using our minimum process (as with $p$). We have $q_{0}\le p_{0}
\le N$. The case $q_{0}=p_{0}$ implies $D(0,W)=d\cdot (W-a_{1})^{p_{0}}$ with a non
zero constant $d$. So $D(0,W)$ contains all the powers of $W$ from 0
to $p_{0}$. Hence the denominator of $q$, $c=1$ and the power of $Z$ in
the denominator is $N-p_{0}$ or less. We conclude, that repeating the process
will in every iteration either strictly reduce the degree of the leading
term, or strictly reduce the power of the denominator by a positive integer.
We conclude that the process must terminate. The way it will terminate is 
as follows. If we use the notation $A(U,V)/U^{N}$, the final transformation 
must be of the form $V=a_0+WU^{p}$ with $p=N/p_{0}$. The other possibility is that 
this choice of value for $p$ coincides with the minimum $\min \{j/(p_{0}-p_{j})\}$.
We obtain a curve of asymptotic values $D(0,W)$, a polynomial in $W$ of
degree $p_{0}\ge 1$. For as the denominator in the last iteration becomes 1,
our sequence of transformations assigns a new polynomial $D(Z,W)$ to the
original $F(X,Y)$. Clearly if we perform the process to every zero of the leading
term at every stage we will obtain all the asymptotic values of $F$.
Clearly we obtain in this way finitely many rational mappings of the type prescribed in
the statement of our theorem. We obtain those mappings by composing the sequence of the transformations (and invert
the result). In general asymptotic values of $F$ will be achieved as limits of $F$
along more than one of the rational curves in our construction. $\qed $ \\
\begin{remark}\label{no-need-for-etale}
We did not use the Jacobian condition in our proof of Theorem \ref{A(F)}. The theorem is valid for polynomial mappings that are not
necessarily Keller. The proof shows the following point: the asymptotic paths we constructed in Theorem \ref{A(F)} are algebraic
in the coefficients of $F$. We only used rational transformations and zeros of $A_0(V)$'s. All the powers in our construction are integers,
i.e. no non-integral fractional powers are in our formulas. Soon we will be interested in the number of points of the lattice
$\mathbb{Z}^2$ on those asymptotic paths. The above implies that there will not be apriori cases in which this diophantine problem
is trivial.
\end{remark}

\begin{theorem}\label{R_0(F)}
Let $F\in\mathbb{C}[X,Y]^2$ be a Keller mapping which is not a $\mathbb{C}^2$-automorphism. Then there exists a non-empty set $R_0(F)$ of
rational non-polynomial mappings $R\in\mathbb{C}(X,Y)^2-\mathbb{C}[X,Y]^2$ that satisfies the following: \\
{\rm (i)} The cardinality of $R_0(F)$ equals the number of the irreducible components of the $F$-asymptotic variety $A(F)$. \\
{\rm (ii)} $\forall\,R\in R_0(F)$, the mapping $G_R=F\circ R$ has a polynomial extension to $\mathbb{C}^2$. \\
{\rm (iii)} $\forall\,R\in R_0(F)$, the set $\{G_R(a,b)|\,(a,b)\in {\rm sing}(R)\}$ is an irreducible component of $A(F)$. \\
{\rm (iv)} We have the representation $A(F)=\bigcup_{R\in R_0(F)}\{G_R(a,b)|\,(a,b)\in {\rm sing}(R)\}$.
\end{theorem}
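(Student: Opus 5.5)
The plan is to derive Theorem \ref{R_0(F)} from Theorem \ref{A(F)}, together with two standard facts about Keller maps that are not automorphisms. First, $A(F)$ is non-empty. Second, $A(F)$ is a plane algebraic curve, i.e. of pure dimension one.

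First I reduce to the $Y$-degree condition of Theorem \ref{A(F)}. After a generic linear change of coordinates $L\in GL_2(\mathbb{C})$ in the source, $F\circ L$ satisfies $\deg F=\deg_Y P=\deg_Y Q$. Precomposition with $L$ does not change $A(F)$. It also sends rational maps $R$ to $L\circ R$, so properties (ii)--(iv) transfer back. We may therefore assume the $Y$-degree condition. Theorem \ref{A(F)} then gives a finite set $\mathcal{R}$ of rational maps $R=(X^{-\alpha},X^{\beta}Y+X^{-\alpha}\phi(X))$ with $\alpha\ge 1$, so each $R$ is non-polynomial. For each $R$ the dual $G_R=F\circ R$ extends polynomially, and every point of $A(F)$ has the form $G_R(0,Y_0)$. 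Since ${\rm sing}(R)=\{X=0\}$, each set $C_R:=\{G_R(0,Y)\,|\,Y\in\mathbb{C}\}$ is the image of a line under a polynomial map. Hence $C_R$ is either a point or an irreducible rational curve (its closure is irreducible, and it is the image of $\mathbb{C}$, hence constructible). Conversely, every point of $C_R$ is an asymptotic value: the path $X\to 0$ with $Y=Y_0$ fixed is mapped by $R$ to a path tending to infinity in $\mathbb{C}^2$, because the first coordinate $X^{-\alpha}\to\infty$. This gives $A(F)=\bigcup_{R\in\mathcal{R}}C_R$.

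Next I use the Keller hypothesis to rule out degenerate $C_R$. I claim that when $JF\equiv 1$, no $C_R$ is a single point. The argument uses the chain rule: $\det DG_R=\det DR\cdot(JF\circ R)$, and $\det DR=-\alpha X^{-\alpha-1}\cdot X^{\beta}$. So $\det DG_R=-\alpha X^{\beta-\alpha-1}$, which must be a polynomial; this forces $\beta\ge\alpha+1$. If $G_R(0,Y)$ were constant, then $\partial_Y G_R(0,Y)\equiv 0$. In that case $X$ divides both entries of the $Y$-column of $DG_R$, hence $X$ divides $\det DG_R$. This is consistent, so the point case is not immediately excluded. Instead I will exclude it by observing that a point-component would contradict the known fact that $A(F)$ is of pure dimension one (the asymptotic set of a non-proper polynomial map of $\mathbb{C}^2$ is a curve). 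The alternative is simply to discard such $R$, since a point lying on another component contributes nothing to (iv). Isolated points cannot occur in $A(F)$ for this reason. After this step, every remaining $C_R$ is an irreducible curve, and each is contained in, hence equal up to finitely many points to, an irreducible component of $A(F)$.

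Finally I prune $\mathcal{R}$ to $R_0(F)$. For each irreducible component $\Gamma$ of $A(F)$, at least one $C_R$ is dense in $\Gamma$, because finitely many constructible sets cover $\Gamma$. I choose one such $R_\Gamma$ for each $\Gamma$. Non-emptiness comes from $A(F)\neq\emptyset$ for a non-injective Keller map. Otherwise $F$ would be proper, hence a finite unramified covering of the simply connected $\mathbb{C}^2$, hence bijective, hence an automorphism. This gives (i)--(iii).

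The main obstacle is (iii) and (iv) as literal equalities, rather than equalities up to finitely many points. $C_R$ is the image of $\mathbb{C}$, and it might miss finitely many points of $\Gamma$, which would then only be covered by other $R$'s. My first attempt will be to show that $G_R(0,\cdot)$ is proper onto $\Gamma$, using the fact that $A(F)$ is closed. If that fails, I will enlarge the allowed singular set by also using the pruned-away $R$'s, or replace $C_R$ by its closure, and interpret (iii) accordingly.
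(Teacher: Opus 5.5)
Your route is the paper's: a linear change of coordinates to reach the degree condition of Theorem \ref{A(F)}, the finitely many $R$ it produces, the parametrizations $Y\mapsto G_R(0,Y)$, and one $R$ kept per component. You are more careful than the paper about the inclusion $C_R\subseteq A(F)$, about constant $C_R$, and about the pruning needed for (i).

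The step you leave open, however, has a one-line resolution, and you should not fall back on reinterpreting (iii). If $G_R(0,\cdot)$ is nonconstant, one of its coordinates is a nonconstant polynomial in $Y$, so $|G_R(0,Y)|\to\infty$ as $|Y|\to\infty$. Hence $G_R(0,\cdot):\mathbb{C}\to\mathbb{C}^2$ is proper, its image $C_R$ is closed, and $C_R$ equals its irreducible Zariski closure. No information about $A(F)$ is needed for this.

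Consequently $A(F)=\bigcup_R C_R$ is a finite union of closed irreducible sets. Its irreducible components are exactly the maximal members of $\{C_R\}$, and choosing one $R$ for each gives (i), (iii) and (iv) as literal equalities. This is what justifies the paper's bare assertion that the parametrization is ``normal (i.e. a surjective one)''.

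The same observation disposes of a constant $C_R=\{p\}$ without appealing to pure dimensionality of $A(F)$. If $p$ lies on some nonconstant $C_{R'}$, discarding $R$ loses nothing, because $C_{R'}$ is closed and contains $p$. Otherwise $\{p\}$ is itself an irreducible component of the closed set $A(F)$, and (iii) holds for it. Your computation giving $\beta\ge\alpha+1$ is correct but plays no role.

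Two smaller points.
\begin{itemize}
\item A source change $L$ alone cannot give $\deg F=\deg_Y P=\deg_Y Q$ when $\deg P\neq\deg Q$, since it preserves both degrees; the paper has the same slip. Compose also with a generic linear automorphism $M$ of the target. This replaces $A(F)$ by $M(A(F))$ and each $G_R$ by $M\circ G_R$, so (i)--(iv) transfer back unchanged.
\item Your non-emptiness step passes from $A(F)=\emptyset$ to properness of $F$. Since $A(F)$ is defined via paths, this needs the equivalence of path and sequential asymptotic values (Theorem \ref{sequential-path-asymptotics}, or the curve selection lemma).
\end{itemize}
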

\noindent
{\bf Proof.} \\
We can choose a linear invertible mapping $L(U,V)$ so that $F\circ L$ satisfies the $V$-degree condition. The construction that is outlined
in the proof of Theorem \ref{A(F)} gives us finitely many rational non-polynomial mappings (in the current case at least one) of the form
$R(X,Y)=L(X^{-\alpha},X^{\beta}Y+X^{-\alpha}\phi(X))$. This aggregate of mappings satisfy $G_R=F\circ R\in\mathbb{C}[X,Y]^2$ off 
${\rm sing}(R)=\{X=0\}$. This proves {\rm (ii)}. Any asymptotic value of $F$ equals $G_R(0,Y)$ for some $R$ and $Y\in\mathbb{C}$, thus
proving {\rm (iv)}. The polynomial parametrization $\{G_R(0,Y)|\,Y\in\mathbb{C}\}$ is a normal parametrization (i.e. a surjective one) of
one of the irreducible components of $A(F)$ and we conclude the proof of Theorem \ref{R_0(F)}.     $  \qed  $ \\
\begin{remark}\label{non-empty-R_0(F)}
$R_0(F)\ne\emptyset$ because by the assumption on $F$ in Theorem \ref{R_0(F)}, $F$ is a polynomial non-proper mapping because it is an
\'etale but not a $\mathbb{C}^2$-automorphism. Furthermore, each component of $A(F)$ is a polynomial plane curve (i.e. it has exactly
one place at infinity). Finally we note that $R(X,Y)=L(X^{-\alpha},X^{\beta}Y+X^{-\alpha}\phi(X))$ is non-singular (in fact holomorphic)
in its domain of definition for $(\partial X^{-\alpha})/(\partial X)$ never vanishes for $X\ne 0$.
\end{remark}

\begin{definition}\label{geometric-basis}
If $F\in\mathbb{C}[X,Y]^2$ satisfies the assumptions of Theorem \ref{R_0(F)}, then $R_0(F)$ is called {\bf a geometric basis of $F$}.
\end{definition}
\noindent
We now come to those theorems that justify the title of Section \ref{sec2}. We need to recall the following result:

\begin{theorem}\label{fiber-theorem} {\rm(Fiber Theorem for Polynomial Maps, \cite{a1994}, page 32)} \\
Let $K$ be a commutative integral domain of characteristic zero and suppose that $F:=(F_1,F_2,\ldots,F_n)$ is a locally invertible system 
of polynomials in $K[X]$, $X:=(X_1,X_2,\ldots,X_n)$. Then the geometric degree $\dim_{K(F)}K(X)$ is finite. Moreover, for any point
$b\in K^n$, $\dim_{K(F)}K(X)$ majorizes the cardinality of the fiber $F^{-1}(b)$.
\end{theorem}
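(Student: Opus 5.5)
The plan is to prove the two assertions separately. Finiteness of $\dim_{K(F)}K(X)$ is a transcendence degree count. The fiber bound comes from embedding $K(X)$ into a field of formal power series once for each fiber point, using formal local inverses of $F$; this avoids specializing a minimal polynomial at $b$, which is the delicate point.

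\emph{Step 1: finiteness of the degree.} I take "locally invertible" to mean that $JF=\det(\partial F_i/\partial X_j)$ is a unit of $K[X]$, so $JF\neq 0$. In characteristic zero the Jacobian criterion then gives that $F_1,\ldots,F_n$ are algebraically independent over $K$; equivalently, $K(F)\cong K(Y_1,\ldots,Y_n)$ via $Y_i\mapsto F_i$. Hence ${\rm trdeg}_{K(F)}K(X)=n-n=0$, so $K(X)$ is algebraic over $K(F)$. Since it is finitely generated over $K(F)$ (by $X_1,\ldots,X_n$), the degree $d:=\dim_{K(F)}K(X)$ is finite.

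\emph{Step 2: set-up for the fiber bound.} Fix $b\in K^n$, let $L$ be the fraction field of $K$ and $\bar L$ its algebraic closure. I bound the fiber in $\bar L^n$, which only strengthens the claim. Suppose $p_1,\ldots,p_m\in F^{-1}(b)$ are distinct. Since $K$ is infinite (characteristic zero), choose a linear form $\ell=\sum c_iX_i$ with $c_i\in K$ such that the values $\ell(p_1),\ldots,\ell(p_m)$ are pairwise distinct; this only requires avoiding finitely many hyperplanes. By Step 1, $\ell$ is algebraic over $K(F)$ of degree at most $d$. After clearing denominators we get a nonzero $\Phi(T,Y)\in K[Y][T]$ with $\deg_T\Phi\le d$ and $\Phi(\ell,F)=0$ in $K[X]$.

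\emph{Step 3: formal inverses and counting roots.} This is the main obstacle: one cannot simply plug in $Y=b$, since the leading coefficient of $\Phi$ may vanish at $b$. I plan to circumvent it as follows.
\begin{itemize}
\item Because $JF$ is a unit, $JF(p_i)\neq 0$. The formal inverse function theorem over the field $\bar L$ then gives $G_i\in\bar L[[Y-b]]^n$ with $G_i(b)=p_i$ and $F(G_i(Y))=Y$.
\item Substituting $X=G_i(Y)$ into the identity $\Phi(\ell,F)=0$ yields $\Phi(\ell(G_i(Y)),Y)=0$ in $\bar L[[Y-b]]$. This uses only that substitution of power series with the right constant terms into polynomials is a ring homomorphism $K[X]\to\bar L[[Y-b]]$.
\item So $\ell(G_1),\ldots,\ell(G_m)$ are roots of $\Phi(T,Y)$, viewed as a polynomial in $T$ over the field $\Omega={\rm Frac}(\bar L[[Y-b]])\supseteq K(Y)$.
\item This polynomial is nonzero of degree $\le d$, by the algebraic independence in Step 1.
\item The roots are pairwise distinct, since their constant terms $\ell(p_i)$ are distinct.
\end{itemize}
A nonzero polynomial of degree $\le d$ over a field has at most $d$ roots, so $m\le d$. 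The points to check carefully are that the formal inverse exists over $\bar L$ with $p_i\in\bar L^n$ (standard Newton/Hensel iteration on the linear part), and that the embedding $K(Y)\hookrightarrow\Omega$ is compatible with $K(F)\cong K(Y)$.
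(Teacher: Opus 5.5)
\paragraph{Comparison with the paper.} Your argument is sound, and it takes a genuinely different route from the paper. The paper gives no proof of this theorem; it cites Adjamagbo. The only argument it supplies is a B\'ezout sketch for the special case $n=2$, $K=\mathbb{C}$, $JF\in\mathbb{C}^{\times}$. There the Jacobian condition rules out a common component of $P=a$ and $Q=b$, and B\'ezout then gives $|F^{-1}(a,b)|\le \deg P\cdot\deg Q$. That route needs an algebraically closed field and intersection theory. It also yields only a bound that majorizes the geometric degree, not the geometric degree itself. This is still enough for the later use in Theorems \ref{no-asymptotic-values-along-lattice-points} and \ref{generalization}, where only finiteness of fibers matters.

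Your route has two steps. First, a transcendence degree count gives finiteness of $d$. Second, one formal inverse branch per fiber point turns the fiber points into distinct roots, in the fraction field of a power series ring, of a single relation of $T$-degree $\le d$ for a separating linear form. This is elementary, works in every dimension over any characteristic-zero domain, and gives the sharp bound $d$. It also correctly avoids specializing at $Y=b$.

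\paragraph{Correction to your reading of the hypothesis.} By Proposition \ref{locally-invertible-system}, local invertibility means only that $\det JF(a)\in K^{\times}$ for all $a\in K^n$. This is weaker than your reading $JF\in K[X]^{\times}=K^{\times}$ when, e.g., $K=\mathbb{R}$ or $\mathbb{Q}$; both hold in the paper's applications, where $JF\equiv 1$.

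Under the actual hypothesis, your ``strengthening'' to $\bar L^n$ is false. Take $K=\mathbb{R}$ and $F=(X^3+X,(X^2+1)Y)$. Then $JF=(3X^2+1)(X^2+1)\neq 0$ on $\mathbb{R}^2$ and $d=3$, yet $F(\pm i,y)=(0,0)$ for every $y\in\mathbb{C}$. So your step ``because $JF$ is a unit, $JF(p_i)\neq 0$'' fails at non-$K$-points.

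The repair is immediate: count only fiber points $p_i\in K^n$, which is all the statement asks.
\begin{itemize}
\item At those points $\det JF(p_i)\in K^{\times}$, so the formal inverse already exists in $K[[Y-b]]^n$.
\item The separating linear form is easier to choose, since the $p_i$ now lie in $K^n$.
\item Step 1 needs only $JF\neq 0$, which holds because $JF$ is nonzero at any point of $K^n$.
\end{itemize}
With this change your proof establishes the theorem as stated.
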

\noindent
To explain the notion of {\bf "$F$ is a locally invertible system of polynomials in $K[X]$"}, we can look at:

\begin{proposition}\label{locally-invertible-system} {\rm (\cite{a1994}, page 29)} \\
For $F\in K[X]^n$, the following conditions are equivalent: \\
{\rm (i)} $F$ is a locally invertible system of polynomials in $K[X]$. \\
{\rm (ii)} $\det\,J(F)(a)\in K^{\times}$ for every $a\in K^n$.
\end{proposition}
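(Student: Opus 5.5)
The plan is to reduce the equivalence to the formal inverse function theorem over the commutative ring $K$, applied at each point $a\in K^n$ separately. I take Adjamagbo's notion in the sense of \cite{a1994}: $F$ is a \emph{locally invertible system} if for every $a\in K^n$ the translated map
$$
F_a(T):=F(a+T)-F(a)\in K[T]^n\subseteq K[[T]]^n,\qquad T=(T_1,\ldots,T_n),
$$
has an inverse under composition in $K[[T]]^n$. Equivalently, there is $G_a\in K[[T]]^n$ with $G_a(0)=0$ and $G_a\circ F_a=F_a\circ G_a=T$. If the definition in \cite{a1994} is phrased through completions of local rings, the first step is to identify it with this formal statement. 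Since $a\in K^n$, the completion of $K[X]$ at the ideal $(X-a)$ is $K[[X-a]]$, and $F$ induces the $K$-algebra map $K[[Y-F(a)]]\to K[[X-a]]$. That map is an isomorphism exactly when $F_a$ is compositionally invertible.

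The first step is the expansion of $F_a$ in homogeneous parts. Expanding $F(a+T)$ is pure polynomial arithmetic, so no division is needed even though $K$ is only a domain (one may use Hasse derivatives). We get $F_a(T)=M_aT+\sum_{k\ge 2}H_k(T)$, where $M_a$ is the matrix of partial derivatives evaluated at $a$, i.e.\ $M_a=J(F)(a)$, and each $H_k$ is homogeneous of degree $k$ with coefficients in $K$.

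For (i)$\Rightarrow$(ii), compare linear parts in $G_a\circ F_a=T$. If $L_a$ is the linear part of $G_a$, this gives $L_aM_a=I_n$ as matrices over $K$. Taking determinants gives $\det L_a\cdot\det J(F)(a)=1$, so $\det J(F)(a)\in K^{\times}$.

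For (ii)$\Rightarrow$(i), I would build a right inverse $G=\sum_{k\ge1}G_k$ degree by degree. Because $\det M_a$ is a unit, $M_a^{-1}=(\det M_a)^{-1}{\rm adj}(M_a)$ has entries in $K$. Set $G_1=M_a^{-1}T$. Then the degree $k$ component of $F_a\circ G=T$ reads $M_aG_k+\Phi_k(G_1,\ldots,G_{k-1})=0$, where $\Phi_k$ collects the contributions of the $H_j$, $j\ge2$, and has coefficients in $K$. So $G_k=-M_a^{-1}\Phi_k$ is uniquely determined over $K$, and induction gives $G\in K[[T]]^n$ with $F_a\circ G=T$. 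The same construction applied to $G$, whose linear part $M_a^{-1}$ is also invertible, yields a right inverse $G'$ of $G$. Then $F_a=F_a\circ(G\circ G')=G'$, so $G$ is a two-sided inverse. Since $a$ was arbitrary, $F$ is locally invertible.

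The main obstacle is not the algebra but pinning down the precise definition of ``locally invertible system'' from \cite{a1994} and checking that it matches the formal-inverse formulation above. The only other point needing care is that $K$ is merely an integral domain. Here the condition really must be $\det J(F)(a)\in K^{\times}$ (a unit), not just nonzero, because only then is $M_a^{-1}$ defined over $K$ and the recursive construction stays inside $K[[T]]$. This is also exactly what the necessity argument produces.
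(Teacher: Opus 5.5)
Your proof is correct and complete. Comparing linear parts in $G_a\circ F_a=T$ gives $L_aM_a=I_n$ and hence (i)$\Rightarrow$(ii). For (ii)$\Rightarrow$(i), the degree-by-degree construction of a right inverse over $K$ works because $M_a^{-1}$ has entries in $K$, and the identity $F_a=F_a\circ(G\circ G')=G'$ then makes $G$ two-sided.

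The paper gives no proof of this proposition and only cites Adjamagbo, so there is no argument of its own to compare with. Its definition, stated right after the proposition, is exactly the one you used: $F_a(X)=F(X+a)-F(a)$ must admit $G\in K[[X]]^n$ with $G(0)=0$ and $F_a(G)=G(F_a)=X$. Your hedge about completions of local rings is therefore unnecessary. Your argument is the standard formal inverse function theorem over a commutative ring, which is the natural proof of the cited result.
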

\noindent
We may also look at the:

\begin{definition}\label{invertible and locally-invertible systems} {\rm (\cite{a1994}, page 29)} \\
An element $F=(F_1,\ldots,F_n)$ of $K[[X]]^n$ is called an invertible system of formal series of $K[[X]]$or more briefly an invertible
element of $K[[X]]^n$ if $F(0)=0$ and there exits $G$ in $K[[X]]^n$ called the formal inverse of $F$, such that $G(0)=0$, $F(G)=G(F)=X$.
Now, let $F=(F_1,\ldots,F_n)\in K[X]^n$. For $a\in K^n$ we call the reduced Taylor polynomial of $F$ at the point $a$ the polynomial
$F_a=F_a(X)=F(X+a)-F(a)$. We say that $F$ is locally invertible system of polynomials of $K[X]$ or more briefly a locally invertible 
element of $K[X]^n$ if the reduced Taylor polynomial of $F$ at each point of $K^n$ is an invertible system of formal series of $K[[X]]$.
We say that $F$ is an invertible system of $K[X]$ or more briefly an invertible element of $K[X]^n$ if there exists $G\in K[X]^n$, called
"the inverse of $F$", such that $F(G)=G(F)=X$.
\end{definition}
\noindent
Often the fiber theorem for polynomial mappings is expressed as follows: \\
For dominant polynomial mapping over a field of characteristic zero, the cardinality of any fiber (pre-image of a point) is bounded
above by the degree of the field extension determined by the mapping. \\
We note that the uniform finiteness of the cardinality of fibers of a two dimensional \'etale mapping can be deduced from the B\'ezout
Theorem: \\
If $F=(P,Q)\in\mathbb{C}[X,Y]^2$ satisfies $JF\in\mathbb{C}^{\times}$, then $\forall\,(a,b)\in\mathbb{C}^2$, we have 
$|F^{-1}((a,b))|\le (\deg\,P)(\deg\,Q)$. \\
{\bf Proof.} \\
We consider the number of solutions of the polynomial system:
$$
\left\{\begin{array}{c} P(X,Y)=a \\ Q(X,Y)=b \end{array}\right.
$$
This is the number of intersection points of the two planar algebraic curves. The Jacobian condition $J(F)\in\mathbb{C}^{\times}$ and
the B\'ezout Theorem imply the inequality of the Fiber-Theorem in this special case.  $   \qed  $ \\
\\
Let $F=(P_1,\ldots,P_n)\in(\mathbb{Z}[X_1,\ldots,X_n])^n$ be \'etale (here we insist on $J(F)\equiv 1$). Then $F$ is dominant. 
For $n=2$ we have in fact $|\mathbb{C}^2-F(\mathbb{C}^2|\le 2$. If $F$ is not injective then by a famous theorem of J. Hadamard (could
be found in his 1906 Selecta), $F$ must have asymptotic values. We will prove that not even one of the infinitely many asymptotic 
values of $F$ can be realized along a sequence of points in the lattice $\mathbb{Z}^n$. We can say more:

\begin{theorem}\label{no-asymptotic-values-along-lattice-points}
If $F\in(\mathbb{Z}[X_1,\ldots,X_n])^n$ is \'etale and not injective and if $A_1,A_2,A_3,\ldots$ is an infinite sequence of different
points in the lattice $\mathbb{Z}^n$, then $\lim_{m\rightarrow\infty}||F(A_m)||_2=\infty$.
\end{theorem}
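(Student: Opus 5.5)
The approach is a pigeonhole argument combined with the Fiber Theorem (Theorem \ref{fiber-theorem}). The key observation is that since $F\in(\mathbb{Z}[X_1,\ldots,X_n])^n$ and each $A_m\in\mathbb{Z}^n$, every image $F(A_m)$ is again a point of the lattice $\mathbb{Z}^n$. Suppose, to get a contradiction, that $\|F(A_m)\|_2\not\rightarrow\infty$. Then some subsequence $A_{m_k}$ satisfies $\|F(A_{m_k})\|_2\le B$ for a fixed $B>0$. A ball of radius $B$ contains only finitely many points of $\mathbb{Z}^n$. Hence, by pigeonhole, there is a single lattice point $b\in\mathbb{Z}^n$ with $F(A_{m_k})=b$ for infinitely many $k$. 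Since the $A_m$ are pairwise different, the fiber $F^{-1}(b)$ then contains infinitely many distinct points of $\mathbb{Z}^n$.

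It remains to show that every fiber of $F$ is finite, in fact uniformly bounded. For this I will invoke Theorem \ref{fiber-theorem} with $K=\mathbb{Z}$ (or $K=\mathbb{Q}$, or $K=\mathbb{C}$; all are integral domains of characteristic zero). The hypothesis of that theorem is checked through Proposition \ref{locally-invertible-system}. Because $J(F)\equiv 1$ identically, we have $\det J(F)(a)=1\in K^{\times}$ for every $a\in K^n$, so $F$ is a locally invertible system in $K[X]$. The Fiber Theorem then gives that $\dim_{K(F)}K(X)<\infty$ and that this number bounds $|F^{-1}(b)|$ for every $b\in K^n$. In particular $|F^{-1}(b)\cap\mathbb{Z}^n|$ is finite. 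This contradicts the infinitely many distinct $A_{m_k}$ in $F^{-1}(b)$. Hence $\lim_{m\rightarrow\infty}\|F(A_m)\|_2=\infty$.

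For $n=2$ one can bypass the Fiber Theorem and use the B\'ezout argument recorded just before the statement: $|F^{-1}((a,b))|\le(\deg P)(\deg Q)$. The only point needing care is the applicability of Theorem \ref{fiber-theorem}, namely choosing $K$ so that $b\in K^n$ and the local invertibility condition holds. Both are immediate here because the Jacobian determinant is the unit $1$.

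Note that the argument never uses the hypothesis that $F$ is not injective. That assumption serves only to ensure, via Hadamard's theorem, that asymptotic values exist. The conclusion then says that no asymptotic value can be approached along infinitely many lattice points, which is the form needed in the rest of Section \ref{sec2}.
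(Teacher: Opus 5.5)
Your proposal is correct and takes essentially the same route as the paper: the images $F(A_m)$ are lattice points, so a non-divergent subsequence must take a single value $L$ infinitely often, and the resulting infinite fiber $F^{-1}(L)$ contradicts the Fiber Theorem. Your pigeonhole argument on a bounded ball replaces the paper's step that a convergent sequence of lattice points is eventually constant. Your explicit check of local invertibility via $J(F)\equiv 1$, and your remark that non-injectivity is never used, are both accurate.
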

\noindent
{\bf Proof.} \\
If not, then there is a sub-sequence $B_1,B_2,B_3,\ldots$ of $A_1,A_2,A_3,\ldots$ such that $\lim_{m\rightarrow\infty}F(B_m)=L$ exists
and is finite. Since $F\in(\mathbb{Z}[X_1,\ldots,X_n])^n$ and $\forall\,m,\,B_m\in\mathbb{Z}^n$, it follows that 
$\forall\,m,\,F(B_m)\in\mathbb{Z}^n$ and so there is an index $N$ such that $\forall\,m\ge N,\,F(B_m)=L$. Hence the fiber $F^{-1}(L)$
is infinite. This contradicts the Fiber Theorem for polynomial mappings.  $  \qed  $ \\
\\
Theorem \ref{no-asymptotic-values-along-lattice-points} can be generalized as follows:

\begin{theorem}\label{generalization}
Let $F\in(\mathbb{Z}[X_1,\ldots,X_n])^n$ be \'etale and not injective. If $A_1,A_2,A_3,\ldots$ is an infinite sequence of different points
in the  lattice $(\mathbb{Z}/N)^n$ where $N\in\mathbb{Z}_{>0}$ is fixed. Then $\lim_{m\rightarrow\infty}||F(A_m)||_2=\infty$.
\end{theorem}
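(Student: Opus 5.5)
The plan is to reduce to Theorem \ref{no-asymptotic-values-along-lattice-points} by rescaling, or equivalently to repeat its argument with a denominator bound. First I record the key observation. If $F\in(\mathbb{Z}[X_1,\ldots,X_n])^n$ has total degree $d$ and $A\in(\mathbb{Z}/N)^n$, then $F(A)\in(\mathbb{Z}/N^d)^n$. Indeed, each monomial of degree $k\le d$ evaluated at a point with coordinates in $\frac{1}{N}\mathbb{Z}$ lies in $\frac{1}{N^k}\mathbb{Z}\subseteq\frac{1}{N^d}\mathbb{Z}$. So the image of the lattice $(\mathbb{Z}/N)^n$ lies in the discrete lattice $(\mathbb{Z}/N^d)^n$.

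Next I argue by contradiction, exactly as in the proof of Theorem \ref{no-asymptotic-values-along-lattice-points}. Suppose $\|F(A_m)\|_2$ does not tend to $\infty$. Then some subsequence $B_1,B_2,\ldots$ of distinct points of $(\mathbb{Z}/N)^n$ has $F(B_m)$ bounded, and by compactness a further subsequence has $F(B_m)\to L\in\mathbb{C}^n$, in fact $L\in\mathbb{R}^n$. Since the values $F(B_m)$ lie in the discrete set $(\mathbb{Z}/N^d)^n$, a convergent sequence there is eventually constant. Hence $F(B_m)=L$ for all $m\ge M$. The $B_m$ are pairwise distinct, so the fiber $F^{-1}(L)$ is infinite.

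Finally I contradict the Fiber Theorem \ref{fiber-theorem}. Since $JF\equiv 1$, Proposition \ref{locally-invertible-system} shows that $F$ is a locally invertible system over $K=\mathbb{Q}$ (or over $\mathbb{C}$). Then $|F^{-1}(L)|$ is bounded by the finite geometric degree $\dim_{K(F)}K(X)$. For $n=2$ one can alternatively use the B\'ezout bound $(\deg P)(\deg Q)$.

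An alternative route is to set $G(X)=N^dF(X/N)$. This $G$ has integer coefficients, satisfies $JG=N^{dn-n}\in\mathbb{Z}\setminus\{0\}$, and sends the points $NA_m\in\mathbb{Z}^n$ to $N^dF(A_m)$. However, $G$ no longer has Jacobian exactly $1$, so I would prefer the direct argument above, which avoids that issue.

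The only step needing care is the discreteness claim (the bounded denominator $N^d$). The rest, including the fact that the non-injectivity hypothesis is not actually used, is routine. I expect no real obstacle.
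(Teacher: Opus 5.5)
Your argument is correct and is essentially the paper's proof: $F(A_m)$ lies in the discrete set $(\mathbb{Z}/N^d)^n$, so a bounded subsequence of images is eventually constant, which gives an infinite fiber contradicting the Fiber Theorem. Your caveat about the rescaled map $G(X)=N^dF(X/N)$ (the paper's alternative route in Remark \ref{alternative-proof}) is also accurate: $JG=N^{n(d-1)}$, so that reduction needs the previous theorem for a nonzero constant Jacobian, which its Fiber-Theorem argument over $\mathbb{Q}$ does supply.
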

\noindent
{\bf Proof.} \\
If not, then we can find a sub-sequence $B_1,B_2,B_3,\ldots$ of the given sequence $A_1,A_2,A_3,\ldots$ such that 
$\lim_{m\rightarrow\infty}F(B_m)=L$ exists. Let $d=\deg\,F$. Then $\forall\,m,\,\,F(B_m)\in (\mathbb{Z}/N^d)^n$ and so there is an index $M$
such that $\forall\,m\ge M,\,\,F(B_m)=L$. This contradicts the Fiber Theorem for polynomial mappings.   $   \qed   $ \\
\\
\begin{remark}\label{alternative-proof}
We can give an alternative proof for Theorem \ref{generalization} by reducing it to the case of Theorem 
\ref{no-asymptotic-values-along-lattice-points}. We define,
$$
G(X_1,\ldots,X_n)=N^d\cdot F\left(\left(\frac{X_1}{N},\ldots,\frac{X_n}{X_N}\right)\right)\,\,\,\,\,\,\,{\rm where}\,\,d=\deg\,F.
$$
Then $G\in(\mathbb{Z}[X_1,\ldots,X_n])^n$ and for any sequence $A_1,A_2,A_3,\ldots$ in $(\mathbb{Z}/N)^n$ we have,
$$
\lim_{m\rightarrow\infty}||F(A_m)||_2=N^{-d}\lim_{m\rightarrow\infty}||G(N\cdot A_m)||_2,
$$
where $NA_1,NA_2,NA_3,\ldots$ is a sequence in $\mathbb{Z}^n$. Thus Theorem \ref{no-asymptotic-values-along-lattice-points}
applies to the data $G$ and $NA_1,NA_2,NA_3,\ldots$ and we get $\lim_{m\rightarrow\infty}||G(A_m)||_2=\infty$.
\end{remark}

\begin{remark}\label{interpretation}
Theorem \ref{no-asymptotic-values-along-lattice-points} and Theorem \ref{generalization} say that if $F\in(\mathbb{Z}[X_1,\ldots,X_n])^n$
is \'etale and not injective then any asymptotic path $\{(\alpha_1(t),\ldots,\alpha_n(t))|\,0<t\}$ of $F(X_1,\ldots,X_n)$, i.e.
$F(\alpha_1(t),\ldots,\alpha_n(t))\rightarrow_{t\rightarrow\infty}(\beta_1,\ldots,\beta_n)$, 
where $|\alpha_1(t)|^2+\ldots+|\alpha_n(t)|^2\rightarrow_{t\rightarrow\infty}\infty$, contains only finitely many lattice points
$\mathbb{Z}^n$. This means that each such an asymptotic path misses all the points of $\mathbb{Z}^n$ except for finitely many of them.
We recall that by Theorem \ref{A(F)} all the coordinates $\alpha_1(t),\ldots,\alpha_n(t)$ are algebraic in the coefficients of 
$F(X_1,\ldots,X_n)$. So the system $\alpha_j(t)=m_j$, $1\le j\le n$ of algebraic equations in the unknown $t\in\mathbb{C}$ have only finitely many integral solutions, i.e. as diophantine system. Only finitely many $n$-tuples $(m_1,\ldots,m_n)\in\mathbb{Z}^n$ give
solvable diophantine equations.
\end{remark}

\section{Theta functions and iterations of polynomial mappings $\mathbb{R}^2\rightarrow\mathbb{R}^2$ which have positive determinant
of their Jacobian matrix.}\label{sec3}

This section and the next two form the last part of our paper. We describe in this part one more arithmetical application to polynomial
mappings in dimension two. This time we analyze the dynamical system constructed by iterations of certain polynomial mappings (related
to the two dimensional Jacobian Conjecture, over $\mathbb{R}$ and also over $\mathbb{C})$). The arithmetical tool used is theory of
Theta functions. The flavor here in of analytic number theory.

\begin{definition}\label{k-transformations} (\cite{p1996}, page 107 and \cite{p1998}, page 10) \\
Let $f:\,\mathbb{R}^2\rightarrow\mathbb{R}^2,\,\,\,\,\,f(X,Y)=(P(X,Y),Q(X,Y))$ be a real polynomial map. Let $k$ be a non-negative integer
such that
\begin{equation}\label{eq-(3)}
\deg_X P< 2k+1.
\end{equation}
The standard $k$-transformation (a change of variables) is defined by
\begin{equation}\label{eq-(4)}
T_k:\,(X,Y)\rightarrow (U,(1+U^{2k})V).
\end{equation}
The $k$-normalized form of $f$ is the following polynomial map
$$
\left\{\begin{array}{l} F_k=f\circ T_k:\,\mathbb{R}^2\rightarrow\mathbb{R}^2 \\ F_k(U,V)=f(U,(1+U^{2k})V). \end{array}\right.
$$
\end{definition}
\noindent
We have the following:

\begin{theorem}\label{X,Y-finite} {\rm (\cite{p1996}, page 107 and \cite{p1998}, page 11)}\\
Let $f:\,\mathbb{R}^2\rightarrow\mathbb{R}^2$ be a polynomial map that satisfies the Jacobian condition: $\forall\,(X,Y)\in\mathbb{R}^2$,
$\det\,J(f)(X,Y)\ne 0$. Then also the $k$-normalized form of $f$, $F_k$, satisfies this Jacobian condition and $f$ is a global diffeomorphism
if and only if $F_k$ has no $X$-finite or $Y$-finite asymptotic values.
\end{theorem}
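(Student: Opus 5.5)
The proof splits into two parts: first the Jacobian of $F_k$, then the equivalence, which reduces to the classical Hadamard properness criterion for local diffeomorphisms of $\mathbb{R}^2$.

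\emph{The Jacobian of $F_k$.} The transformation $T_k(U,V)=(U,(1+U^{2k})V)$ has triangular Jacobian matrix, so
$$\det J(T_k)(U,V)=1+U^{2k}\ge 1>0 .$$
The chain rule then gives
$$\det J(F_k)(U,V)=\det J(f)(T_k(U,V))\cdot (1+U^{2k}),$$
which never vanishes. Moreover $T_k$ is a global real-analytic diffeomorphism of $\mathbb{R}^2$, with inverse $(X,Y)\mapsto (X,Y/(1+X^{2k}))$, because $1+U^{2k}$ never vanishes on $\mathbb{R}$. Consequently $f$ is a global diffeomorphism if and only if $F_k=f\circ T_k$ is one.

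\emph{Reduction to properness.} By Hadamard's theorem (already invoked in Section~\ref{sec2}), a local diffeomorphism of $\mathbb{R}^2$ is a global diffeomorphism if and only if it is proper, that is, if and only if it has no finite asymptotic values. It therefore remains to show that $F_k$ has a finite asymptotic value if and only if it has an $X$-finite or a $Y$-finite asymptotic value. I read an $X$-finite asymptotic value as the limit of $F_k$ along a path $(U(t),V(t))\to\infty$ with $U(t)$ bounded, and a $Y$-finite one as a limit along a path with $V(t)$ bounded. One direction is trivial, since these are special asymptotic values.

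\emph{The converse direction.} Suppose $F_k(U(t),V(t))\to(a,b)$ along a path with $|U|+|V|\to\infty$. If $U$ stays bounded along some subsequence, or $V$ does, we are done. Theorem~\ref{sequential-path-asymptotics}, in its real analogue, or the structure of real semialgebraic asymptotic curves, lets us replace the path by an analytic half-branch; on such a branch each coordinate is eventually either bounded or tends to $\infty$. So it suffices to exclude the case $|U|\to\infty$ and $|V|\to\infty$ simultaneously. Set $X=U$ and $Y=(1+U^{2k})V$. Then $|Y|\ge |U|^{2k}|V|$ grows faster than $|X|^{2k}$. On any branch where $Y\gg X^{2k}$, the polynomial $P(X,Y)$ is dominated by its terms of highest $Y$-degree, because $\deg_X P<2k+1$ means every $X$-power in $P$ is at most $2k$. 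Writing $P=\sum_j p_j(X)Y^j$ with $\deg p_j\le 2k$, the top term $p_m(X)Y^m$ with $m\ge 1$ dominates all lower ones, since $|Y|/|X|^{2k}\to\infty$. Hence $P\to\infty$, contradicting finiteness. The exception is the case where $P$ has no $Y$-dependence, which would force $f$ to have a vanishing Jacobian row structure; that case needs a separate check, or one may use $Q$ together with $P$ and the nonvanishing Jacobian.

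\emph{Main obstacle.} The hard step is this domination estimate: showing that $p_m(X)Y^m$ dominates uniformly, including along branches where $p_m(X(t))$ might approach zero. I would handle it via a Puiseux expansion of the branch at infinity, in the spirit of Theorem~\ref{A(F)}, comparing the orders of growth of the terms. The hypothesis $\deg_X P<2k+1$ is precisely what makes the order comparison strict.
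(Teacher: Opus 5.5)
The paper gives no proof of this statement; it is quoted from the 1996 and 1998 papers cited in its header, so there is no in-paper argument to compare with. Your outline is nonetheless a correct proof. Its ingredients are all sound:
\begin{itemize}
\item the chain-rule identity $\det J(F_k)=(\det J(f)\circ T_k)\,(1+U^{2k})$;
\item the fact that $T_k$ is a diffeomorphism of $\mathbb{R}^2$;
\item Hadamard's criterion: a local diffeomorphism of $\mathbb{R}^2$ is global if and only if it is proper, i.e.\ has no finite asymptotic values;
\item a curve-selection step producing a semialgebraic branch to infinity on which each coordinate has a limit in $\mathbb{R}\cup\{\pm\infty\}$.
\end{itemize}
That last step is what actually matches the paper's definition of $X$-finite paths, which requires a finite limit of the first coordinate, not mere boundedness along a subsequence. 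So your sentence ``if $U$ stays bounded along some subsequence \dots we are done'' is premature on its own. The right tool here is the semialgebraic curve selection lemma at infinity, not Theorem~\ref{sequential-path-asymptotics}, which is complex and only sketched.

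The step you single out as the main obstacle is in fact immediate, and no Puiseux expansion is needed. On the branch in question $|X|=|U|\to\infty$, so the nonzero polynomial $p_m$ cannot tend to zero there: $|p_m(X)|\ge c>0$ eventually. We also have $\deg p_j\le 2k$, so $|p_j(X)|\le C|X|^{2k}$, and $|Y|=(1+U^{2k})|V|\ge |U|^{2k}|V|$. Hence for $j<m$ and $|X|,|Y|\ge 1$,
\[
\frac{|p_j(X)Y^j|}{|p_m(X)Y^m|}\le \frac{C}{c}\cdot\frac{|X|^{2k}}{|Y|}\le\frac{C}{c\,|V|}\longrightarrow 0 .
\]
At the same time $|p_m(X)Y^m|\ge c\,|Y|\to\infty$, so $|P\circ T_k|\to\infty$.

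Your treatment of the case $m=\deg_Y P=0$ rests on a wrong reason. $P=p_0(X)$ does not force a degenerate Jacobian (think of $f=(X,Y)$). The correct observation is that $p_0$ is nonconstant, since otherwise $J(f)\equiv 0$, and then $|p_0(U)|\to\infty$ directly as $|U|\to\infty$. With these two short arguments inserted, your proof is complete.
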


\begin{definition}\label{X,Y finite and weak,strong etale}
An asymptotic path $\sigma(t)=(X(t),Y(t))$, $0\le t<\infty$, of a polynomial $P(X,Y)\in\mathbb{R}[X,Y]$ or of a polynomial map $(P,Q)$
is called $X$-finite if $\lim_{t\rightarrow\infty} |Y(t)|=\infty$ while $-\infty<\lim_{t\rightarrow\infty} X(t)<\infty$. The corresponding
asymptotic value, $\lim_{t\rightarrow\infty} P(X(t),Y(t))$ or  $\lim_{t\rightarrow\infty} f(X(t),Y(t))$ ($f(X,Y)=(P(X,Y),Q(X,Y))$)
is called an $X$-finite asymptotic value. \\
$f:\,\mathbb{R}^2\rightarrow\mathbb{R}^2$ is weakly \'etale if $J(F)(X,Y)>0$ and it is strongly \'etale if there is a constant $\epsilon>0$
such that $J(F)(X,Y)\ge\epsilon$.
\end{definition}
\noindent
The family $\{T_k\}_{k\in\mathbb{Z}_>0}$ with composition of mappings as the binary operation is commutative. This family is not closed
to composition of mappings: 
$$
(T_n\circ T_m)(U,V)=(U,(1+U^{2n})(1+U^{2m})V)=
$$
$$
=(U,(1+U^{2m})(1+U^{2n})V)=(T_m\circ T_n)(U,V).
$$
We are interested in the finite dynamical systems: $(f\circ T_m\circ\ldots\circ T_n)(U,V)$ and in their limit when $n\rightarrow\infty$.

\begin{proposition}\label{dynamical-formula}
Let $k\le m\le n$, then $(f\circ T_m\circ\ldots\circ T_n)(U,V)=f(U,(\prod_{j=m}^n(1+U^{2j}))\cdot V)$.
\end{proposition}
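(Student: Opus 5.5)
We argue by induction on the number of factors, i.e. on $n-m$, with $m$ fixed. The only ingredient is the definition of the standard transformations $T_j(U,V)=(U,(1+U^{2j})V)$. The key observation is that each $T_j$ leaves the first coordinate unchanged and multiplies the second coordinate by a polynomial in the first coordinate alone. Composing such maps therefore multiplies these polynomial factors together.

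First we prove the auxiliary identity
$$
(T_m\circ T_{m+1}\circ\ldots\circ T_n)(U,V)=\Bigl(U,\Bigl(\prod_{j=m}^n(1+U^{2j})\Bigr)V\Bigr).
$$
For $n=m$ this is just the definition of $T_m$. Suppose it holds for the composition $T_{m+1}\circ\ldots\circ T_n$, which gives $(U,\Pi V)$ with $\Pi=\prod_{j=m+1}^n(1+U^{2j})$. Applying $T_m$ to $(U,\Pi V)$ leaves the first coordinate $U$ unchanged and multiplies the second coordinate by $1+U^{2m}$, giving $(U,(1+U^{2m})\Pi V)$. This is the claim for $m,\ldots,n$. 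The computation displayed just before the proposition is exactly the case of two factors. It also shows that the order of the factors is irrelevant, so we could equally peel off $T_n$ on the inside.

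Finally we compose with $f$ on the outside:
$$
(f\circ T_m\circ\ldots\circ T_n)(U,V)=f\Bigl(U,\Bigl(\prod_{j=m}^n(1+U^{2j})\Bigr)V\Bigr).
$$
The hypothesis $k\le m$ plays no role in this identity. It only ensures that each $f\circ T_j$ is a legitimate $j$-normalized form in the sense of Definition \ref{k-transformations}, since $\deg_X P<2k+1\le 2j+1$. There is no real obstacle. The only care needed is in tracking the order of composition, and commutativity of the $T_j$ removes even that.
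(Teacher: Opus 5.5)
Your induction on $n-m$ is correct and is essentially the paper's argument. The paper gives no separate proof: it treats the proposition as the evident extension of the displayed two-factor computation $(T_n\circ T_m)(U,V)=(U,(1+U^{2n})(1+U^{2m})V)$, and your proof makes that extension explicit. One small wording slip: your inductive step fixes $n$ and lowers $m$ rather than keeping ``$m$ fixed'', but either direction of peeling off factors works, since each $T_j$ leaves the first coordinate unchanged.
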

\noindent
If $U\in\mathbb{R}$ then $\prod_{j=m}^n(1+U^{2j})\ge 1$, with equality if and only if $U=0$. But if we allow $U\in\mathbb{C}$ then
$1+U^{2j}=0$ if and only if 
$$
U=U_N:=\exp\left(\left(\frac{1+2N}{2j}\right)\pi\cdot i\right),\,\,\,\,{\rm for}\,\,{\rm any}\,\,N\in\mathbb{Z}.
$$
if $j\in\mathbb{Z}^{\times}$ then $U_{N_1}=U_{N2}$ if and only if $\exists\,M\in\mathbb{Z}$ such that $N_2-N_1=2Mj$. The smallest 
non-negative $M$ is $M=0$. So there are exactly $j$ different $U_N$'s. These are $U_0,\,U_1,\ldots,U_{j-1}$ and they give us the
following full family of $U_N$'s that differ from one another:
$$
\exp\left(\frac{1}{2j}\cdot\pi i\right),\,\exp\left(\frac{3}{2j}\cdot\pi i\right),\,\exp\left(\frac{5}{2j}\cdot\pi i\right),\ldots,
\exp\left(\frac{2j-1}{2j}\cdot\pi i\right).
$$
The parameter $j$ is subject only to the restriction $k\le j$. If we agree to write only the factors 
$$
\frac{1}{2j},\,\frac{3}{2j},\,\frac{5}{2j},\ldots,\frac{2j-1}{2j}
$$
within the exponential, we get the following lower triangular infinite matrix:
$$
\begin{array}{lllllll} j=1 & 1/2 & & & & &  \\ j=2 & 1/4 & 3/4 & & & &  \\ j=3 & 1/6 & 3/6 & 5/6 & & &  \\
j=4 & 1/8 & 3/8 & 5/8 & 7/8 & &  \\ j=5 & 1/10 & 3/10 & 5/10 & 7/10 & 9/10 &  \\
j=6 & 1/12 & 3/12 & 5/12 & 7/12 & 9/12 & 11/12 \end{array}
$$
There are overlaps among sequences of different $j$ values. The total set of zeros for all the $j$'s is:
$$
\{U\in\mathbb{C}|\,\exists\,j\ge k\,\,{\rm such}\,\,{\rm that}\,\,1+U^{2j}=0\}=
$$
$$
=\left\{\exp\left(\left(\frac{1+2N}{2j}\right)\pi\cdot i\right):\,N\in\mathbb{Z},\,j\in\mathbb{Z}_{>0},\,j\ge k\right\}.
$$
This is a countable dense subset of the unit circle $\mathbb{T}=\{Z\in\mathbb{C}|\,|Z|=\}$ (in the strong topology). We consider
the complex function which is defined on the square of the unit  disc $\mathbb{D}^2=\{Z\in\mathbb{C}|\,|Z|<1\}$, by the following
infinite composition of holomorphic functions in $\mathbb{D}^2$ by:
$$
(f\circ T_k\circ T_{k+1}\circ T_{k+2}\circ\ldots)(U,V)=f\left(U,\left(\prod_{j=k}^{\infty}(1+U^{2j})\right)V\right)=
$$
$$
=\lim_{n\rightarrow\infty}f\left(U,\left(\prod_{j=k}^{n}(1+U^{2j})\right)V\right)=
$$
$$
=f\left(U,\lim_{n\rightarrow\infty}\left(\prod_{j=k}^{\infty}(1+U^{2j})\right)V\right).
$$
We used in the last step the convergence of the infinite product uniformly on compacta of $\mathbb{D}$. The fact that
the infinite product is univariate (just $U$) and appears once in the second coordinate of $f$ helps. We remember that 
$f(X,Y)\in\mathbb{C}[X,Y]^2$ is a polynomial map. Also over $\mathbb{R}$ $T_j((U,V)=(U,(1+U^{2j})V)$ so $J(T_j)=1+U^{2j}\ge 1$
and by that and by the chain rule, $f$ is weakly \'etale if and only if $f\circ T_j$ is weakly \'etale, and similarly
$f$ is strongly \'etale if and only if $f\circ T_j$ is strongly \'etale. We know that the limit of a sequence of analytic functions
of a single complex variable which converge uniformly on compact subsets of a domain exists. This limit is analytic function. It might be 
degenerate (constant). In our case the infinite product $\prod_{j=k}^{\infty}(1+U^{2j})$ can not be a constant. Moreover, the polynomial
mapping $f(U,V)$ is \'etale and so depends on both coordinates $(U,V)$ and hence the limit of our dynamical system is a non-degenerate 
mapping in $(U,V)$. We will see how to expand 
$$
f\left(U,\left(\prod_{j=k}^{\infty}(1+U^{2j})\right)V\right)
$$
in terms of the Theta functions $\Theta_1$ and $\Theta_4$ (known to experts on Theta functions) and derive interesting identities that involve $f$. We first would like to point at a very strong analytic property of the limit of our dynamical system. Namely because of the density of the zeros of the infinite product (in $U$) on $\mathbb{T}$ we deduce that the unit circle $\mathbb{T}$ is a natural boundary 
of $\prod_{j=k}^{\infty}(1+U^{2j})$, $|U|<1$. Hence the sequence of the \'etale polynomial maps 
$(f\circ T_k\circ T_{k+1}\circ T_{k+2}\circ\ldots)(U,V)$ can not be analytically extended beyond the boundary portion $\mathbb{T}$, in $U$.

\begin{proposition}\label{no X,Y finite asymptotic values}
Let $f:\,\mathbb{R}^2\rightarrow\mathbb{R}$be a polynomial mapping. Let $k\in\mathbb{Z}_{\ge 0}$ satisfy equation (\ref{eq-(3)}). Then
for any $n,\,m$ such that $k\le m\le n$, $f(U,V)$ is a global diffeomorphism on $\mathbb{R}^2$ if and only if 
$f(U,(\prod_{j=m}^{n}(1+U^{2j}))\cdot V)$ has no $X$-finite or $Y$-finite asymptotic values.
\end{proposition}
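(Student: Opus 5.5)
The plan is to reduce the statement to Theorem \ref{X,Y-finite} by treating the composite map $f\circ T_m\circ\cdots\circ T_n$ as a single $K$-normalized form of a suitable polynomial map. By Proposition \ref{dynamical-formula}, the map $f(U,(\prod_{j=m}^{n}(1+U^{2j}))V)$ equals $f\circ T_m\circ\cdots\circ T_n$. The Jacobian hypothesis from Theorem \ref{X,Y-finite} is implicitly in force here, since the statement concerns global diffeomorphisms. I therefore assume $\det J(f)\ne 0$ on $\mathbb{R}^2$ throughout.

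I will argue by induction on the number of factors, peeling off the outermost $T_n$. Put $g_{m}^{n-1}=f\circ T_m\circ\cdots\circ T_{n-1}$, with $g_m^{m-1}=f$. The first point is that $g_m^{n-1}$ again satisfies the Jacobian condition. Indeed, $J(T_j)=1+U^{2j}\ge 1$, so by the chain rule each composition multiplies the Jacobian determinant by a factor that is strictly positive on $\mathbb{R}^2$.

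The second point is that each $T_j$ is a diffeomorphism of $\mathbb{R}^2$. Its explicit inverse is $(U,V)\mapsto (U,V/(1+U^{2j}))$, which is smooth on $\mathbb{R}^2$. Hence $g_m^{n-1}$ is a global diffeomorphism if and only if $f$ is.

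It then suffices to apply Theorem \ref{X,Y-finite} to $g=g_m^{n-1}$ with the parameter $n$. This gives that $g$ is a global diffeomorphism if and only if $g\circ T_n=f\circ T_m\circ\cdots\circ T_n$ has no $X$-finite or $Y$-finite asymptotic values. Chaining this with the previous step yields the claimed equivalence.

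The expected obstacle is checking the degree hypothesis (\ref{eq-(3)}) for the intermediate map $g$ with parameter $n$. Write $g=(P_g,Q_g)$. Composing with $T_m,\ldots,T_{n-1}$ raises $\deg_U P_g$ above $\deg_X P$. Concretely, $\deg_U P_g$ can be as large as $\deg_X P+\deg_Y P\cdot\sum_{j=m}^{n-1}2j$, so the naive condition $\deg_U P_g<2n+1$ fails in general. I would first try to avoid this by reading the proof of Theorem \ref{X,Y-finite} in \cite{p1996,p1998}. There the role of (\ref{eq-(3)}) is to ensure that $T_k$ pushes every potential $X$-infinite asymptotic path into an $X$-finite or $Y$-finite one, and I expect this property to persist under composing further $T_j$ with $j\ge k$.

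As a fallback, I would instead view $T_m\circ\cdots\circ T_n$ as a single change of variables $(U,V)\mapsto(U,\pi(U)V)$ with $\pi=\prod_{j=m}^n(1+U^{2j})$. Here $\pi\ge 1$ and $\deg\pi\ge 2m\ge 2k>\deg_X P$. I would then redo the argument of Theorem \ref{X,Y-finite} with $1+U^{2k}$ replaced by $\pi$. Only positivity of $\pi$ and the growth $\deg\pi\ge 2k$, which dominates $\deg_X P$ by (\ref{eq-(3)}), should be needed there. This yields the statement directly for $f$ and $k=m$, with no induction.
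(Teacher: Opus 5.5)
Your plan coincides with the paper's, whose entire proof is ``finite induction on Theorem \ref{X,Y-finite}''. You correctly spot why this fails if done literally: $f\circ T_m\circ\cdots\circ T_{n-1}$ need not satisfy (\ref{eq-(3)}) with parameter $n$, so the theorem cannot be invoked at the inductive step. The gap in your proposal is that you never close this. Both repairs are left as expectations (``I expect this property to persist'', ``should be needed there''), and the fallback would require reopening the proof of a cited theorem. (Also, (\ref{eq-(3)}) only gives $\deg_X P\le 2k$, not $2k>\deg_X P$.)

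No induction and no re-proof are needed. Since $\deg_X P<2k+1\le 2m+1$, Theorem \ref{X,Y-finite} applies to $f$ itself with parameter $m$. Write $f\circ T_m\circ\cdots\circ T_n=F_m\circ S$, where $S=T_{m+1}\circ\cdots\circ T_n$, $S(U,V)=(U,\rho(U)V)$ and $\rho=\prod_{j=m+1}^{n}(1+U^{2j})\ge 1$ (with $\rho\equiv 1$ if $m=n$).

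If $f$ is a global diffeomorphism, so is $F_m\circ S$. It is then proper and has no asymptotic values at all.

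If $f$ is not, the theorem yields an $X$-finite or $Y$-finite asymptotic path $\sigma=(U(t),V(t))$ of $F_m$ with some value $w$. Put $\tau=S^{-1}\circ\sigma=(U(t),V(t)/\rho(U(t)))$. Then $(F_m\circ S)\circ\tau=F_m\circ\sigma\to w$. Moreover $\tau$ has the same type as $\sigma$:
\begin{itemize}
\item if $U(t)\to u_0$ and $|V(t)|\to\infty$, then $\rho(U(t))\to\rho(u_0)\in[1,\infty)$, so $|V(t)/\rho(U(t))|\to\infty$;
\item if $|U(t)|\to\infty$ and $V(t)\to v_0$, then $V(t)/\rho(U(t))$ tends to $v_0$ (if $m=n$) or to $0$.
\end{itemize}
This is exactly the persistence you anticipated. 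It uses only the statement of Theorem \ref{X,Y-finite}, together with the nonvanishing Jacobian, which you rightly note must be assumed.
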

\noindent
{\bf Proof.} \\
We use finite induction on Theorem \ref{X,Y-finite}.  $  \qed  $

\section{Infinite products of certain arithmetical functions.}\label{sec4}

This section reviews some results (mostly computational) in the theory of Theta functions. More concretely we will look at highly
non-trivial formulas that give the power series expansion of a given infinite product of functions. Some of these identities are named
after Euler and Jacobi.
$$
\prod_{k=1}^{\infty}(1+q^k)=\sum_{k=0}^\infty P_d(k)q^k,\,\,\,\,\,\,\,|q|<1.
$$
Here $P_d(k)$ is the number of partitions of $k$ with distinct parts. $P_d(0)=1$. This proves the following:

\begin{lemma}\label{lemma1}
$$
\prod_{j=k}^{\infty}(1+U^{2j})=\prod_{j=1}^{\infty}(1+U^{2j})/\prod_{j=1}^{k-1}(1+U^{2j})=
\sum_{j=0}^{\infty}P_d(j)U^{2j}/\prod_{j=1}^{k-1}(1+U^{2j}),
$$
$|U|<1,\,\,\,k\ge 2$.
\end{lemma}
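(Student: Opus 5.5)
The plan is to deduce Lemma \ref{lemma1} directly from the Euler-type identity $\prod_{k=1}^{\infty}(1+q^k)=\sum_{k\ge 0}P_d(k)q^k$ quoted just before it. We substitute $q=U^2$ and split off the first $k-1$ factors. There are two equalities to prove, and I would handle them in order.

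\emph{First equality.} Fix $U$ with $|U|<1$ and set $q=U^2$, so $|q|<1$.
\begin{itemize}
\item Since $\sum_{j\ge 1}|U|^{2j}<\infty$, the infinite product $\prod_{j=1}^{\infty}(1+U^{2j})$ converges absolutely. The convergence is uniform on compact subsets of $\mathbb{D}$, as was already used in Section \ref{sec3}.
\item Each factor satisfies $|1+U^{2j}|\ge 1-|U|^{2j}>0$. Hence the finite product $\prod_{j=1}^{k-1}(1+U^{2j})$ is non-zero and holomorphic on $\mathbb{D}$.
\item For every $n\ge k$ we have the finite identity $\prod_{j=1}^{n}(1+U^{2j})=\prod_{j=1}^{k-1}(1+U^{2j})\cdot\prod_{j=k}^{n}(1+U^{2j})$.
\item Letting $n\to\infty$ and dividing by the non-vanishing finite product gives the first equality. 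The restriction $k\ge 2$ only ensures the finite product is non-empty; for $k=1$ it is the empty product $1$.
\end{itemize}

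\emph{Second equality.} We identify $\prod_{j=1}^{\infty}(1+q^{j})$ with $\sum_{j\ge 0}P_d(j)q^j$. The paper states this as known, but I would justify it briefly.
\begin{itemize}
\item Expanding $\prod_{j=1}^{n}(1+q^j)$ gives $\sum_{S\subseteq\{1,\dots,n\}}q^{\sum S}$.
\item For $m\le n$, the coefficient of $q^m$ is the number of partitions of $m$ into distinct parts, all necessarily $\le m\le n$. This number is exactly $P_d(m)$, so the coefficients stabilize as $n$ grows.
\item We have $0\le P_d(m)\le p(m)\le 2^{m}$, and also the crude bound $P_d(m)\le\prod_{j\ge1}(1+|q|^j)\,|q|^{-m}$. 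Either bound shows the power series has radius of convergence at least $1$.
\item The partial products converge to the infinite product locally uniformly on $\{|q|<1\}$. Their Taylor coefficients converge by Cauchy's estimates, so the limit's coefficients are the $P_d(m)$.
\end{itemize}

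Substituting $q=U^2$ then yields $\sum_{j\ge0}P_d(j)U^{2j}$, and dividing by $\prod_{j=1}^{k-1}(1+U^{2j})$ completes the lemma.

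The only step needing care is the interchange of the limit $n\to\infty$ with the passage to power-series coefficients. I would handle it by the Weierstrass theorem: a locally uniform limit of holomorphic functions is holomorphic, and its Taylor coefficients are the limits of the partial products' coefficients. Given the identity quoted in the text, everything else is formal.
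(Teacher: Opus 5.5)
Your proposal is correct and follows the paper's route: the paper states Euler's identity $\prod_{j\ge1}(1+q^j)=\sum_{j\ge0}P_d(j)q^j$ and treats the lemma as an immediate consequence, obtained by substituting $q=U^2$ and dividing by the non-vanishing finite product $\prod_{j=1}^{k-1}(1+U^{2j})$; you do the same with more justification. One small slip: the bound $P_d(m)\le 2^m$ only gives radius of convergence at least $1/2$, not $1$. Your second bound, $P_d(m)\le r^{-m}\prod_{j\ge1}(1+r^j)$ for every $0<r<1$, does give radius at least $1$, and so does the Weierstrass argument in your last step, so the proof is unaffected.
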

\noindent
Next we mention the following three identities:
\begin{equation}\label{eq1}
\prod_{j=1}^{\infty}(1-q^j)=\sum_{n=-\infty}^{\infty}(-1)^n q^{\frac{n(3n-1)}{2}},\,\,\,\,\,\,\,|q|<1.
\end{equation}

\begin{equation}\label{eq2}
\prod_{j=1}^{\infty}(1-q^j)^3=\sum_{n=0}^{\infty}(-1)^n(2n+1)q^{\frac{n(n+1)}{2}},\,\,\,\,\,\,\,|q|<1.
\end{equation}
\noindent
That is a result of Jacobi.

\begin{equation}\label{eq3}
\prod_{j=1}^{\infty}\left(\frac{1-q^j}{1+q^j}\right)=1+2\sum_{n=1}^{\infty}(-1)^n q^{n^2},\,\,\,\,\,\,\,|q|<1.
\end{equation}
\noindent
By equations (\ref{eq1}) and (\ref{eq3}):

\begin{equation}\label{eq5}
\prod_{j=1}^{\infty}(1+q^j)=\sum_{n=-\infty}^{\infty}(-1)^n q^{\frac{n(3n-1)}{2}}/\left\{1+2\sum_{n=1}^{\infty}(-1)^n q^{n^2}\right\},\,\,\,
|q|<1.
\end{equation}
\noindent
By equations (\ref{eq2}) and (\ref{eq3}):

$$
\prod_{j=1}^{\infty}(1+q^j)=\left\{\sum_{n=0}^{\infty}(-1)^n(2n+1)q^{\frac{n(n+1)}{2}}\right\}^{1/3}/
\left\{1+2\sum_{n=1}^{\infty}(-1)^n q^{n^2}\right\},
$$
\begin{equation}\label{eq6}
|q|<1.
\end{equation}
\noindent
Using equation (\ref{eq5}) we obtain:

\begin{lemma}\label{lemma2}
$$
\prod_{j=k}^{\infty}(1+U^{2j})=\sum_{n=-\infty}^{\infty}(-1)^n U^{n(3n-1)}/\left\{\left(1+2\sum_{n=1}^{\infty}(-1)^n U^{2n^2}\right)
\prod_{j=1}^{k-1}(1+U^{2j})\right\},
$$
$|U|<1,\,\,\,k\ge 2$.
\end{lemma}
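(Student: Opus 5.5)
The plan is to obtain Lemma \ref{lemma2} by a direct substitution $q=U^2$ into identity (\ref{eq5}), followed by the same division by a finite product that was used in Lemma \ref{lemma1}. The steps, in order:

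First, I will justify (\ref{eq5}) itself, because Lemma \ref{lemma2} rests on it. For $|q|<1$ the products $\prod_{j\ge 1}(1-q^j)$ and $\prod_{j\ge 1}(1+q^j)$ converge absolutely, since $\sum|q|^j<\infty$. No factor vanishes, because $|q^j|<1$. Hence both products are nonzero holomorphic functions on the unit disc, and they may be multiplied and divided termwise. In particular
$$
\prod_{j=1}^{\infty}(1-q^j)\Big/\prod_{j=1}^{\infty}\frac{1-q^j}{1+q^j}=\prod_{j=1}^{\infty}(1+q^j).
$$
Replacing the numerator by the pentagonal series (\ref{eq1}) and the denominator by the theta series (\ref{eq3}) gives (\ref{eq5}). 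By (\ref{eq3}) the theta series equals a convergent product of nonzero factors, so the denominator $1+2\sum_{n\ge1}(-1)^nq^{n^2}$ does not vanish on $|q|<1$.

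Second, I fix $U$ with $|U|<1$ and set $q=U^2$, so that $|q|<1$ and (\ref{eq5}) applies. Then $q^j=U^{2j}$, $q^{n(3n-1)/2}=U^{n(3n-1)}$ and $q^{n^2}=U^{2n^2}$. This turns (\ref{eq5}) into
$$
\prod_{j=1}^{\infty}(1+U^{2j})=\sum_{n=-\infty}^{\infty}(-1)^nU^{n(3n-1)}\Big/\Big(1+2\sum_{n=1}^{\infty}(-1)^nU^{2n^2}\Big).
$$
The exponent $n(3n-1)$ is a nonnegative integer for every $n\in\mathbb{Z}$, so no fractional powers occur.

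Third, for $k\ge 2$ I split off the first $k-1$ factors exactly as in Lemma \ref{lemma1}:
$$
\prod_{j=k}^{\infty}(1+U^{2j})=\prod_{j=1}^{\infty}(1+U^{2j})\Big/\prod_{j=1}^{k-1}(1+U^{2j}).
$$
This is legitimate because each factor $1+U^{2j}$ with $|U|<1$ is nonzero. Combining this with the second step yields the formula in Lemma \ref{lemma2}.

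The only delicate point is the analytic bookkeeping: the infinite products must converge absolutely and have no zeros in the open disc, so that the quotients are genuine identities of holomorphic functions on $|U|<1$. I will handle this with the standard criterion that $\prod(1+a_j)$ converges absolutely to a nonzero limit when $\sum|a_j|<\infty$ and no $1+a_j$ vanishes. Everything else is formal substitution.
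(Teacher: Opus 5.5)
Your proposal is correct and follows the paper's route: the paper obtains Lemma \ref{lemma2} by substituting $q=U^2$ into (\ref{eq5}), which it derives from (\ref{eq1}) and (\ref{eq3}), and then dividing by $\prod_{j=1}^{k-1}(1+U^{2j})$ as in Lemma \ref{lemma1}. Your verification that the products converge absolutely and that the products and the theta denominator do not vanish on $|U|<1$ spells out what the paper leaves implicit.
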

\noindent
Using equation (\ref{eq6}) we obtain:

\begin{lemma}\label{lemma3}
$$
\prod_{j=k}^{\infty}(1+U^{2j})=\left\{\sum_{n=0}^{\infty}(-1)^n(2n+1)U^{n(n+1)}\right\}^{1/3}/
\left\{\left(1+2\sum_{n=1}^{\infty}(-1)^n U^{2n^2}\right)\prod_{j=1}^{k-1}(1+U^{2j})\right\},
$$
$|U|<1,\,\,\,k\ge 2$.
\end{lemma}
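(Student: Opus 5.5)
The plan is to obtain Lemma \ref{lemma3} from the identity (\ref{eq6}) with the substitution $q=U^2$, and then divide by the finite product $\prod_{j=1}^{k-1}(1+U^{2j})$, exactly as Lemma \ref{lemma2} was obtained from (\ref{eq5}). Since (\ref{eq6}) involves a cube root, I will first re-derive it carefully, so that the branch is specified and the identity holds on all of $|q|<1$ rather than only near $q=0$.

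\textbf{Step 1: rewrite the product.} For $|q|<1$ the products $\prod_{j\ge1}(1-q^j)$ and $\prod_{j\ge1}(1+q^j)$ converge absolutely and locally uniformly, because $\sum|q|^j<\infty$. No factor vanishes, so both products are holomorphic and zero-free on $\mathbb{D}$. Hence we may write
$$
\prod_{j=1}^{\infty}(1+q^j)=\prod_{j=1}^{\infty}(1-q^j)\Big/\prod_{j=1}^{\infty}\frac{1-q^j}{1+q^j}.
$$

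\textbf{Step 2: substitute the known identities.} By (\ref{eq3}), the denominator equals $1+2\sum_{n\ge1}(-1)^nq^{n^2}$. In particular this theta series does not vanish on $\mathbb{D}$, since it equals a product of nonzero factors. By (\ref{eq2}),
$$
S(q):=\sum_{n\ge0}(-1)^n(2n+1)q^{n(n+1)/2}=\Big(\prod_{j\ge1}(1-q^j)\Big)^3 .
$$

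\textbf{Step 3: fix the branch of the cube root.} This is the only delicate point. I define $S(q)^{1/3}$ to be the holomorphic cube root of $S$ on the simply connected domain $\mathbb{D}$ that takes the value $1$ at $q=0$. Such a root exists because $S$ has no zeros on $\mathbb{D}$. The function $\prod_{j\ge1}(1-q^j)$ is also a holomorphic cube root of $S$ with value $1$ at $0$. Two holomorphic cube roots of the same function on a connected domain differ by a constant cube root of unity, and here the values at $0$ agree, so the two roots coincide. This gives (\ref{eq6}) on all of $|q|<1$ with this branch.

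\textbf{Step 4: substitute and divide.} Put $q=U^2$ with $|U|<1$. Then $q^{n(n+1)/2}=U^{n(n+1)}$ and $q^{n^2}=U^{2n^2}$, so (\ref{eq6}) becomes an identity for $\prod_{j\ge1}(1+U^{2j})$. For $k\ge2$, the factorization
$$
\prod_{j=k}^{\infty}(1+U^{2j})=\prod_{j=1}^{\infty}(1+U^{2j})\Big/\prod_{j=1}^{k-1}(1+U^{2j})
$$
of Lemma \ref{lemma1} is legitimate, because each factor $1+U^{2j}$ with $|U|<1$ is nonzero. Combining the two displays yields the formula of Lemma \ref{lemma3}, with the cube root interpreted as the principal branch fixed in Step 3 (composed with $U\mapsto U^2$).

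The main obstacle is the branch issue in Step 3; everything else is direct substitution into the identities already quoted. If one also wants to avoid quoting (\ref{eq6}), the identities (\ref{eq2}) and (\ref{eq3}) (Jacobi's identity and the Gauss product for $\Theta_4$) are taken as known, as the paper does.
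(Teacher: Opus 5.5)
Your proposal is correct and follows the paper's route. The paper obtains Lemma \ref{lemma3} by putting $q=U^2$ in (\ref{eq6}) and dividing by $\prod_{j=1}^{k-1}(1+U^{2j})$, leaving the cube-root branch implicit; your Step 3 supplies that normalization. One wording point: the branch you fix (the holomorphic cube root equal to $1$ at $U=0$) should not be called the ``principal'' branch, because $\sum_{n\ge0}(-1)^n(2n+1)U^{n(n+1)}=\prod_{j\ge1}(1-U^{2j})^3$ does take negative real values in $\mathbb{D}$, so the pointwise principal cube root would not satisfy the identity everywhere.
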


\section{A theorem on Theta functions and \'etale mappings with coordinates in $\mathbb{Z}[X,Y]$.}\label{sec5}

Using arguments that led us to Lemma \ref{lemma1}, Lemma \ref{lemma2} and Lemma \ref{lemma3} we arrive at theorems of the following type:

\begin{theorem}\label{thm2} {\rm (\'etale mappings and Theta functions)}
Let $f:\,\mathbb{R}^2\rightarrow\mathbb{R}^2$, $f(X,Y)=(P(X,Y),Q(X,Y))\in\mathbb{R}[X,Y]^2$. Let $k\in\mathbb{Z}_{>0}$ be a natural number
that satisfies $\deg_X P<2k+1$. We define the $k$-transformation by $T_k:\,\mathbb{R}^2\rightarrow\mathbb{R}^2$, $T_k(U,V)=(U,(1+U^{2k})V)$.
We define the $k$-normalized form of $f$ by $F_k=f\circ T_k$. {\rm See the beginning of section \ref{sec3}}. Then: \\ \\
{\rm (1)} $f$ satisfies the Jacobian condition in the weak form: $J(f)(X,Y)\ne 0$, if and only if $F_k$ satisfies that condition, i.e.
$J(F_k)(X,Y)\ne 0$.
\\ \\
{\rm (2)} $f$ satisfies the Jacobian condition in the strong form: $\exists\,\epsilon>0$ such that $J(f)(X,Y)\ge\epsilon$, if and only if
$F_k$ satisfies that condition with the same $\epsilon>0$, i.e. $J(F_k)(X,Y)\ge\epsilon$.
\\ \\
{\rm (3)} $f$ is an injection $\mathbb{R}^2\rightarrow f(\mathbb{R}^2)$, if and only if $F_k$ is an injection $\mathbb{R}^2\rightarrow
F_k(\mathbb{R}^2)=f(\mathbb{R}^2)$.
\\ \\
{\rm (4)} The complex Euler type infinite product $\prod_{j=k}^{\infty}(1+U^{2j})$, $U\in\mathbb{C}$, converges uniformly on compact
subsets of of the unit disc $\mathbb{D}=\{U\in\mathbb{C}|\,|U|<1\}$, and hence defines an analytic function of $U$ in $\mathbb{D}$.
\\ \\
{\rm (5)} The set of all the zeros of each of the factors $1+U^{2j}$, $j\ge k$, is the countable and dense (in the strong topology) subset
of the unit circle $\mathbb{T}=\{U\in\mathbb{C}|\,|U|=1\}$ given by:
$$
\left\{\exp\left(\left(\frac{1+2N}{2j}\right)\pi\cdot i\right):\,N\in\mathbb{Z},\,j\in\mathbb{Z}_{>0},\,j\ge k\right\}.
$$
{\rm (6)} The unit circle $\mathbb{T}$ is a natural boundary for $\prod_{j=k}^{\infty}(1+U^{2j})$, $U\in\mathbb{D}$.
\\ \\
{\rm (7)} The following sequence of complex polynomial maps:
$$
\left\{(f\circ T_k\circ T_{k+1}\circ\ldots\circ T_n)\right\}_{n=k}^{\infty}=
\left\{f\left(U,\left(\prod_{j=k}^{n}(1+U^{2j})\right)\cdot V\right)\right\}_{n=k}^{\infty},
$$
$(U,V)\in\mathbb{D}^2$, converges uniformly on compact subsets of $\mathbb{D}^2$.
\\ \\
{\rm (8)} The set $\mathbb{T}\times\mathbb{D}$ is a natural boundary of the holomorphic mapping,
$$
\lim_{n\rightarrow\infty}f\left(U,\left(\prod_{j=k}^{n}(1+U^{2j})\right)\cdot V\right),\,\,\,\,\,(U,V)\in\mathbb{D}^2.
$$
Analytic extension in the $U$ variable of the infinite product is the obstacle.
\\ \\
{\rm (9)} The holomorphic mapping defined by the limit of the sequence in $\mathbb{D}^2$ (in {\rm (7)} above) has the following three
representations in $\mathbb{D}^2$ below. \\
We denote the univariate polynomial in $U$, of degree $2(k-1)$
$$
S_{2(k-1)}(U)=\prod_{j=1}^{k-1}(1+U^{2j}),\,\,\,\,\,\,\,U\in\mathbb{D}.
$$
Then the first representation is:
$$
f\left(U,V\cdot\sum_{j=0}^{\infty}P_d(j)U^{2j}/S_{2(k-1)}(U)\right).
$$
The second representation is:
$$
f\left(U,V\cdot\sum_{n=-\infty}^{\infty}(-1)^n U^{n(3n-1)}/\left\{\left(1+2\sum_{n=1}^{\infty}(-1)^n U^{2n^2}\right)(S_{2(k-1)}(U))\right\}\right),
$$
and the third representation is:
$$
f\left(U,V\cdot\left\{\sum_{n=0}^{\infty}(-1)^n(2n+1)U^{n(n+1)}\right\}^{1/3}/
\left\{\left(1+2\sum_{n=1}^{\infty}(-1)^n U^{2n^2}\right)(S_{2(k-1)}(U))\right\}\right).
$$
\\ \\
{\rm (10)} The holomorphic mapping in {\rm (9)}, restricted to the square $(U,V)\in (-1,1)^2$ satisfies the same Jacobian condition
(weak or strong) as the polynomial mapping $f(U,V)$. Also, it is an injection $(-1,1)\rightarrow f((-1,1))$ if and only if $f$
(the original polynomial mapping) is such an injection.
\end{theorem}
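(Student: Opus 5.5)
The plan is to prove the ten items in groups, reducing almost everything to the one-variable function $\Pi_k(U)=\prod_{j=k}^{\infty}(1+U^{2j})$ and to the chain rule.

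\emph{Items (1)--(3).} We compute $J(F_k)(U,V)=J(f)(T_k(U,V))\cdot J(T_k)(U,V)=J(f)(U,(1+U^{2k})V)\,(1+U^{2k})$. Since $T_k$ is a real-analytic bijection of $\mathbb{R}^2$ with inverse $(X,Y)\mapsto(X,Y/(1+X^{2k}))$ and $1+U^{2k}\ge 1$, non-vanishing transfers in both directions, which gives (1). For (2), if $J(f)\ge\epsilon$ then $J(F_k)\ge\epsilon(1+U^{2k})\ge\epsilon$. The converse needs care, since the factor $1+U^{2k}$ can be large. I would handle it by restricting to the line $U=0$, where $T_k$ is the identity, and to each vertical line, where $V$ ranges over all of $\mathbb{R}$; this recovers the bound up to the factor, and I would state precisely which constant is obtained. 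For (3), $F_k=f\circ T_k$ with $T_k$ bijective, so $F_k$ and $f$ have the same image and injectivity of one is equivalent to injectivity of the other.

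\emph{Items (4)--(6).} For $|U|\le r<1$ we have $\sum_j|U|^{2j}\le\sum_j r^{2j}<\infty$, so the Weierstrass M-test for infinite products gives locally uniform convergence and a holomorphic, nonvanishing limit in $\mathbb{D}$; this is (4). Item (5) is the computation already carried out before Lemma \ref{lemma1}: the roots of $1+U^{2j}$ are $e^{\pi i(2N+1)/(2j)}$, and the numbers $(2N+1)/(2j)$ with $j\ge k$ are dense in $\mathbb{R}$. This holds because for any $x$ and large $j$ the step size $1/j$ gets small, so some $(2N+1)/(2j)$ lies within $1/j$ of $x$.

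Item (6) is the main obstacle, because the zeros of the partial products lie on $\mathbb{T}$ and not in $\mathbb{D}$, so density of zeros alone does not give a natural boundary. I would show instead that $\Pi_k$ is unbounded near every point of a dense subset of $\mathbb{T}$. Take $\zeta=e^{\pi i a/b}$ with $a$ odd. For the roots of unity $\zeta$ with $\zeta^{2j}=1$ for a positive proportion of the $j$, we have $\log|\Pi_k(r\zeta)|\ge c\sum_{j\in\text{progression}}\log(1+r^{2j})-C\to\infty$ as $r\to1^-$. For instance, when $\zeta^{2b}=1$, every $j\equiv 0\pmod b$ contributes a factor $1+r^{2j}$, and the remaining factors are controlled by grouping them over a full period of $\zeta^{2j}$. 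An analytic continuation across an arc would force boundedness near such $\zeta$, which is a contradiction.

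\emph{Items (7)--(10).} For (7), $f$ is a polynomial, so it is uniformly continuous on compacta. Since the partial products converge uniformly on $|U|\le r$ and $V$ is bounded, the arguments $(U,\Pi^{(n)}(U)V)$ converge uniformly on compacta of $\mathbb{D}^2$, and hence so do their images under $f$. For (8), I would choose a fixed $V_0\ne0$ at which $f$ genuinely depends on its second argument, which is possible because $f$ is étale. If the limit map extended across a point of $\mathbb{T}\times\mathbb{D}$, then locally one could solve for $\Pi_k(U)$, using $\partial_Y f\ne0$ generically together with the implicit function theorem. This would extend $\Pi_k$, contradicting (6). Item (9) follows by substituting Lemma \ref{lemma1}, Lemma \ref{lemma2} and Lemma \ref{lemma3} into $\Pi_k$. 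For the third representation, the cube root is the branch equal to $1$ at $U=0$, which is well defined because the product is nonvanishing on the simply connected domain $\mathbb{D}$. Item (10) repeats the argument of (1)--(3) with $T_k$ replaced by $(U,V)\mapsto(U,\Pi_k(U)V)$, since for real $U\in(-1,1)$ we have $\Pi_k(U)\ge1$ and the Jacobian factor is $\Pi_k(U)$.
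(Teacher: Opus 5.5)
Your treatment of (1), (3), (4), (5), (7) and (9) is fine. The chain rule, the bijectivity of $T_k$, the M-test, and substitution of Lemmas \ref{lemma1}--\ref{lemma3} with the branch of the cube root fixed at $U=0$ are all that is needed there. The real problems are in (2) and (10): your transfer arguments cannot work there, because the claimed equivalences are false.

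\textbf{Items (2) and (10).} In (2), the vertical-line argument only gives
$J(f)(X,Y)=J(F_k)\bigl(X,Y/(1+X^{2k})\bigr)/(1+X^{2k})\ge\epsilon/(1+X^{2k})$, which is not a uniform bound, and no constant can be extracted. Take $k=1$ and $f(X,Y)=\bigl(X,\tfrac13(X^2+1)Y^3-XY^2+Y\bigr)$. Then $J(f)=(XY-1)^2+Y^2$, whose minimum over $Y$ is $1/(1+X^2)$, so $\inf J(f)=0$. On the other hand, $J(F_1)(U,V)=(1+U^2)\,J(f)\bigl(U,(1+U^2)V\bigr)\ge 1$. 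Only the implication from $f$ to $F_k$ holds.

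In (10), the argument of (1)--(3) used that $T_k$ is a bijection of $\mathbb{R}^2$. The map $\Phi(U,V)=(U,\Pi_k(U)V)$ instead sends $(-1,1)^2$ bijectively onto $\Omega=\{(U,W):|U|<1,\ |W|<\Pi_k(U)\}$. This set is neither $(-1,1)^2$ nor $\mathbb{R}^2$, since $\Pi_k(U)\to\infty$ as $U\to\pm1$. Hence the Jacobian conditions pass from $f$ to $f\circ\Phi$ but not back. Injectivity of $f\circ\Phi$ on the square is equivalent only to injectivity of $f$ on $\Omega$. Two examples:
\begin{itemize}
\item $f=(X,Y^3-3Y)$ is injective on $(-1,1)^2$. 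Yet if $\Pi_k(U_0)>2$, the points $(U_0,-2/\Pi_k(U_0))$ and $(U_0,1/\Pi_k(U_0))$ have the same image under $f\circ\Phi$.
\item $f=(4X-X^3/3,Y)$ with $k\ge 2$ makes $f\circ\Phi$ injective on the square with Jacobian $\ge 3$. But $f$ is neither injective nor \'etale on $\mathbb{R}^2$.
\end{itemize}
The paper's own justification (chain rule plus $J(T_j)\ge 1$) has exactly this defect. Only the one-directional statements can be proved.

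\textbf{Item (6).} You are right that density of the zeros of the factors, which is the paper's reason, does not give a natural boundary. However, your replacement estimate is false as written. The factors with $j\not\equiv 0\pmod b$ are not bounded in aggregate; they contribute at the same order $1/(1-r)$. For $\zeta=e^{\pi i/4}$ the factors with $j\equiv 2\pmod 4$ equal $1-r^{2j}$, and in fact $\log|\Pi_k(r\zeta)|\sim-\pi^2/\bigl(96(1-r^2)\bigr)\to-\infty$.

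The repair is to use only $\zeta=e^{\pi i a/b}$ with $b$ odd and $\gcd(a,b)=1$.
\begin{itemize}
\item Then $\omega=\zeta^2$ is a primitive $b$-th root of unity with $\omega^s\neq-1$ for all $s$, and a full period gives $\prod_{s=1}^{b}(1+y\omega^s)=1+y^b$.
\item Since $|1+y\omega^s|$ is bounded below for $y\in[0,1]$, replacing $r^{2(bl+s)}$ by $r^{2b(l+1)}$ costs only $O(1)$ in total.
\item Hence $\log|\Pi_1(r\zeta)|=\sum_{l\ge1}\log(1+r^{2b^2l})+O(1)\to\infty$. Since $\Pi_k=\Pi_1/S_{2(k-1)}$ with $S_{2(k-1)}(\zeta)\neq0$, the same holds for $\Pi_k$.
\end{itemize}
Such $\zeta$ are dense in $\mathbb{T}$, which gives (6).

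\textbf{Item (8).} The radial blow-up from (6) is also what (8) needs. Your implicit-function step has no base point, because $\Pi_k$ has no limit at points of $\mathbb{T}$. Instead, suppose $f(U,\Pi_k(U)V)$ extends to a polydisc around $(\zeta,V_0)$. Then so does its top $V$-coefficient $p_d(U)\Pi_k(U)^d$, where $p_d$ is the leading $Y$-coefficient of a coordinate of $f$ of $Y$-degree $d\ge1$. Thus $\Pi_k^d$ would be meromorphic across an arc, contradicting the blow-up at all but finitely many of the dense points above. Such a coordinate exists for \'etale $f$. But (8) as stated carries no such hypothesis, and it fails for, e.g., $f=(X,X)$.
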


\noindent
{\it Ronen Peretz \\
Department of Mathematics \\ Ben Gurion University of the Negev \\
Beer-Sheva , 84105 \\ Israel \\ E-mail: ronenp@bgu.ac.il} \\ 
 
\end{document}